\documentclass[12pt,a4paper]{amsart}
\usepackage[utf8]{inputenc}
\usepackage[english]{babel}
\usepackage{amsmath,amssymb,amsthm,amscd,graphicx,mathrsfs,url,dsfont,enumitem}
\usepackage[usenames,dvipsnames]{color}
\usepackage[colorlinks=true,linkcolor=Blue,citecolor=Green,urlcolor=Blue]{hyperref}
\usepackage{dsfont}
\usepackage[super]{nth}
\usepackage[open, openlevel=2, depth=3, atend]{bookmark}
\hypersetup{pdfstartview=XYZ}
\usepackage{epigraph}
\usepackage{mathtools}
\usepackage{enumitem}
\usepackage{mathrsfs}
\usepackage{comment}

\def\?[#1]{\textbf{[#1]}\marginpar{\Large{\textbf{??}}}}

\usepackage{import}
\usepackage{xifthen}
\usepackage{pdfpages}
\usepackage{transparent}
\usepackage{chngcntr}
\counterwithin{figure}{section}
\usepackage{subcaption}

\numberwithin{equation}{section}

\newtheorem{theorem}{Theorem}[section]
\newtheorem{lemma}[theorem]{Lemma}

\newtheorem{proposition}[theorem]{Proposition}

\theoremstyle{remark}
\newtheorem{remark}{Remark}
\newtheorem{example}{Example}

\theoremstyle{definition}

\newcommand{\kg}{\mathfrak g}
\newcommand{\C}{\mathbb C}
\renewcommand{\S}{\mathbb S}
\newcommand{\D}{\mathbb D}
\newcommand{\cA}{\mathcal A}
\renewcommand{\d}{\mathrm d}
\newcommand{\kB}{\mathfrak B}
\newcommand{\del}{\partial}

\newcommand{\cJ}{\mathcal J}
\newcommand{\R}{\mathbb R}
\newcommand{\cL}{\mathcal L}

\newcommand{\delbar}{\bar\partial}
\newcommand{\Z}{\mathbb Z}

\renewcommand{\Re}{\mathrm{Re}}

\newcommand{\cM}{\mathcal M}
\newcommand{\cQ}{\mathcal Q}
\newcommand{\bJ}{\mathbf{J}}
\newcommand{\Ad}{\mathrm{Ad}}
\newcommand{\cT}{\mathcal{T}}

\newcommand{\N}{\mathbb{N}}

\newcommand{\cH}{\mathcal{H}}

\newcommand{\tr}{\mathrm{tr}}

\newcommand{\cG}{\mathcal{G}}

\newcommand{\qand}{\quad\text{and}\quad}

\newcommand{\cZ}{\mathcal{Z}}

\makeatletter
\newsavebox{\@brx}
\newcommand{\llangle}[1][]{\savebox{\@brx}{\(\m@th{#1\langle}\)}%
  \mathopen{\copy\@brx\mkern2mu\kern-0.9\wd\@brx\usebox{\@brx}}}
\newcommand{\rrangle}[1][]{\savebox{\@brx}{\(\m@th{#1\rangle}\)}%
  \mathclose{\copy\@brx\mkern2mu\kern-0.9\wd\@brx\usebox{\@brx}}}
\makeatother

\title[Non-Abelian multiplicative chaos]{Non-Abelian multiplicative chaos on the circle}
\author{Guillaume Baverez}
\email{guillaume.baverez@bicmr.pku.edu.cn}
\address{Beijing International Center for Mathematical Research, Peking University}
\date{}

\keywords{Random fields, Kac--Moody algebras, conformal field theory.}
\subjclass{60G57; 81R10.}

\begin{document}

\begin{abstract}
In this note, we introduce a generalisation of multiplicative chaos measures which is both non-Gaussian and non-Abelian. The renormalisation procedure takes as inputs an irreducible unitary representation of a compact connected Lie group, together with a Kac--Moody unitarising measure at some level $\kappa$, and outputs a random measure on the circle with values in the space of positive definite Hermitian endomorphisms of the representation space.

So far, our construction is valid for the range of $\kappa$-values corresponding to the $L^2$-phase. The proof follows the usual route in the theory of multiplicative chaos, relying on an exact formula for the one-point function and a bound on the two-point function at colliding points. These expressions are derived using the rich algebraic structure of the theory: namely, we establish a one-dimensional version of the Knizhnik--Zamolodchikov equations.
\end{abstract}

\maketitle


\section{Introduction}

    \subsection{Overview and main result}
The theory of Gaussian multiplicative chaos --- pioneered by Kahane in the context of turbulence \cite{Kahane85} --- gives a way to exponentiate a logarithmically correlated Gaussian distribution and has had a profound and lasting influence on probability theory, with applications ranging from random matrix theory to number theory and finance, and rather spectacularly to conformal field theory (see \cite{RV14-review,RV25-proceedings,Sheffield_proceedings,BP25} for reviews and additional references). Several generalisations of Kahane's original work have been proposed in different directions: complex-valued distributions \cite{LRV15,JSW20}, matrix-valued measures \cite{CRV13}, and exponentials of non-Gaussian $\log$-correlated fields \cite{Jego20,ABJL23}. 

This note sits at the intersection of the last two directions, with the construction of a multiplicative chaos based on the recently introduced Kac--Moody unitarising measures \cite{Bav26}. To set the stage (see Section \ref{subsubsec:lie-groups} for background on Lie groups), let $G$ be a compact connected Lie group with complexification $G_\C$ and dual Coxeter number $\check h$. The space of Hermitian Yang--Mills metrics in the trivial holomorphic bundle $\D\times G_\C$ is defined as
\[\cM_\D:=\{H\in C^\infty(\D;\exp(i\kg))|\,\delbar(H^{-1}\del H)=0=\del(\delbar HH^{-1})\text{ and }H(0)=1_{G_\C}\},\]
and the Kac--Moody unitarising measure at level $\kappa<-2\check h$ is a probability measure on $\cM_\D$, denoted $\nu_\kappa$. The main properties of this measure are recalled in Section \ref{subsubsec:KMU}, but let us already point out that its samples almost surely do not converge on $\S^1=\del\D$. The goal of this note is precisely to exhibit a renormalisation procedure which produces a sensible mathematical object on the boundary.


Let $\varrho:G_\C\to\mathrm{GL}(V)$ be the complexification of an irreducible unitary representation of $G$ on some complex (finite dimensional) Hilbert space $(V,\langle\cdot,\cdot\rangle_V)$, and let $\rho:\kg_\C\to\mathrm{End}(V)$ be the induced representation of the Lie algebra. For all $H\in\cM_\D$, $\varrho(H)$ takes values in the space $\cH_V$ of positive definite Hermitian endomorphisms of $V$. In Proposition \ref{prop:one-point}, it will be shown that $\int_{\cM_\D}\varrho(H(z))\d \nu_\kappa(H)=(1-|z|^2)^\frac{\Omega_\rho}{\kappa+\check h}\mathrm{Id}_V$ for all $z\in\D$, where $\Omega_\rho$ is the eigenvalue of the quadratic Casimir. This suggests the form of the renormalisation prefactor in our main result (see Section \ref{sec:preliminaries} for the different notions involved in the statement).

\begin{theorem}\label{thm:convergence}
   Fix $\varrho$ as above and $\kappa<\min(-2\check h,-\check h-2\lambda_\rho^+)$, with $\lambda_\rho^+$ defined in \eqref{eq:lambda_rho}.
   
    For each $\epsilon>0$, let $M_{\kappa,\rho}^\epsilon$ be the random $\cH_V$-valued Borel measure on $\S^1$ defined by 
\begin{equation}\label{eq:def-reg-measure}
\d M^\epsilon_{\kappa,\rho}(e^{i\theta}):=\,(1-e^{-2\epsilon})^\frac{-\Omega_\rho}{\kappa+\check h}\varrho\left(H(e^{-\epsilon+i\theta})\right)\d\theta,\qquad\qquad\theta\in\R,
\end{equation}
with $H$ sampled from $\nu_\kappa$. Then, the family $(M^\epsilon_{\kappa,\rho})_{\epsilon>0}$ converges weakly in probability as $\epsilon\to0$ to some non-trivial $\cH_V$-valued measure $M_{\kappa,\rho}$.
\end{theorem}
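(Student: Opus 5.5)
The plan is the standard $L^2$ argument from multiplicative chaos theory: show that $(M^\epsilon_{\kappa,\rho}(f))_{\epsilon>0}$ is Cauchy in $L^2(\nu_\kappa;\mathrm{End}(V))$ for each continuous test function $f$ on $\S^1$. Convergence in probability of the whole measure then follows by testing against a countable dense family of $f$. The limit is $\cH_V$-valued because it is a limit of positive semi-definite matrices. It is non-trivial because the first moment passes to the limit: by Proposition \ref{prop:one-point} and rotation invariance,
\[
\mathbb E[M^\epsilon_{\kappa,\rho}(f)]=\Big(\int_0^{2\pi}f(e^{i\theta})\,\d\theta\Big)\,\mathrm{Id}_V
\]
for every $\epsilon>0$, since $(1-e^{-2\epsilon})^{\frac{\Omega_\rho}{\kappa+\check h}}$ is exactly the one-point function at $|z|=e^{-\epsilon}$ and cancels the prefactor.

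To get the Cauchy property, I expand the squared $L^2$ norm, using the Hilbert--Schmidt norm on $\mathrm{End}(V)$:
\[
\mathbb E\,\|M^\epsilon(f)-M^\delta(f)\|^2 = \iint f(e^{i\theta})\overline{f(e^{i\theta'})}\,\big(K_{\epsilon,\epsilon}-K_{\epsilon,\delta}-K_{\delta,\epsilon}+K_{\delta,\delta}\big)(\theta,\theta')\,\d\theta\,\d\theta'.
\]
Here $K_{\epsilon,\delta}(\theta,\theta')$ denotes the renormalised two-point function
\[
\mathbb E\,\tr\big(\varrho(H(e^{-\epsilon+i\theta}))\,\varrho(H(e^{-\delta+i\theta'}))\big)
\]
multiplied by the two prefactors. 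It suffices to prove two things:
\begin{itemize}
\item[(i)] $K_{\epsilon,\delta}(\theta,\theta')$ converges pointwise for $\theta\neq\theta'$ to a common limit $K(\theta,\theta')$ as $\epsilon,\delta\to0$;
\item[(ii)] there is a uniform bound $|K_{\epsilon,\delta}(\theta,\theta')|\le C|e^{i\theta}-e^{i\theta'}|^{-\gamma}$ with some $\gamma<1$.
\end{itemize}
Dominated convergence then sends the expression to zero.

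The two-point function will be computed through the algebraic structure of $\nu_\kappa$, namely the one-dimensional Knizhnik--Zamolodchikov equations mentioned in the abstract. The Ward identities give an ODE in the cross-ratio variable for the function $\mathbb E[\varrho(H(z))\otimes\varrho(H(w))]$ with values in $\mathrm{End}(V\otimes V)$. The KZ connection involves the operator $\frac{1}{\kappa+\check h}\sum_a\rho(t_a)\otimes\rho(t_a)$. Its eigenvalues on the irreducible components of $V\otimes V$ are $\frac12(\Omega_\mu-2\Omega_\rho)$, where $\mu$ ranges over the highest weights occurring in $V\otimes V$. I expect the quantity $\lambda_\rho^+$ of \eqref{eq:lambda_rho} to be the extremal such eigenvalue.

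Solving the ODE, or at least analysing it near the singular point where $z,w$ approach a common boundary point, gives a Frobenius-type expansion. After renormalisation, this yields boundary limits of $K_{\epsilon,\delta}$ with singularity at most $|\theta-\theta'|^{\frac{2\lambda_\rho^+}{\kappa+\check h}}$, up to sign conventions. The exponent is $>-1$, hence integrable, precisely when $\kappa<-\check h-2\lambda_\rho^+$; the condition $\kappa<-2\check h$ is needed for $\nu_\kappa$ itself. Pointwise convergence for separated points should follow from the smoothness of the KZ solutions in the boundary variables.

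I expect the main obstacle to be the uniform-in-$(\epsilon,\delta)$ bound (ii) at colliding points. The solution of the KZ-type system must be controlled simultaneously as both points tend to the circle and to each other. My first attempt would be to decompose $V\otimes V$ into isotypic components, on which the connection is diagonal near the collision. I would then apply a Grönwall-type estimate to the ODE in the radial variables, bounding each component by its leading power and taking the worst exponent $\lambda_\rho^+$. Positivity, $\tr(AB)\ge0$ for $A,B\in\cH_V$, should also help reduce the task to upper bounds only.
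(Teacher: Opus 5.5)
Your skeleton is the paper's: an $L^2$-Cauchy estimate, normalisation by the one-point function of Proposition \ref{prop:one-point}, and dominated convergence against $|e^{i\theta}-e^{i\theta'}|^{2\lambda_\rho^+/(\kappa+\check h)}$, which is integrable exactly when $\kappa<-\check h-2\lambda_\rho^+$. Your first-moment argument for non-triviality is correct, and more explicit than the paper's. The gap is in how you propose to get (i) and (ii).

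The Ward identities do not close on $F(z,w)=\mathbb E[\varrho(H(z))\otimes\varrho(H(w))]$, and there is no cross-ratio to reduce to. The normalisation $H(0)=1_{G_\C}$ pins the origin, so the only symmetries are rotations and global $G$-conjugation, and \eqref{eq:ipp} only involves the modes $\cJ_{b,-m}$ with $m\geq1$. Note that \eqref{eq:bound} is written through $|1-z_1\bar z_2|$, which is not M\"obius invariant. Concretely, let the current inserted at $z$ act on $g(w)$ and $g^*(w)$. Besides $\big(\varrho(H(z))\otimes\varrho(H(w))\big)\circ\sum_{b\in\kB}\rho(b)\otimes\rho(b)$, you then produce, via $\Ad$-invariance of the Casimir tensor, the term $\sum_{b\in\kB}\varrho(g^*(z))\rho(b)\varrho(g(z))\otimes\varrho(g^*(w))\rho(b)\varrho(g(w))$. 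Its expectation is not a function of $F$. So there is no KZ ODE for $F$ to analyse by Frobenius, and the isotypic decoupling your Gr\"onwall step relies on is not available.

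The missing idea is to settle for a scalar \emph{differential inequality} rather than a solvable system. The paper works with $\langle\tr(H(z_1))\tr(H(z_2))\rangle_{\kappa,\rho}$.

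\begin{itemize}
\item After tracing, the offending term becomes the pairing of $\varrho(gg^*(z_1))\otimes\varrho(gg^*(z_2))$ with the Casimir tensor, see \eqref{eq:barbis}.
\item The reflection symmetry of $\nu_\kappa$ says the law of $gg^*$ is that of $z\mapsto H(\bar z)$; it comes from invariance under conjugating the Taylor coefficients. Combined with rotation invariance, it removes this term from the radial derivative along $z_2=z_1\zeta$.
\item The resulting equation \eqref{eq:edo} is still not closed. But its right-hand side is the ratio of $\mathbb E\,\mathrm{Tr}_{V\otimes V}\big(P\circ\sum_{b\in\kB}\rho(b)\otimes\rho(b)\big)$ to $\mathbb E\,\mathrm{Tr}_{V\otimes V}(P)$, with $P=\varrho(H(z_1))\otimes\varrho(H(z_2))$ positive definite. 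Hence it lies in $[\lambda_\rho^-,\lambda_\rho^+]$.
\item Integrating in $r$ from $r=0$, where $\cG_{\kappa,\rho}$ equals $1$, yields \eqref{eq:bound}.
\end{itemize}

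This positivity of $P$, not $\tr(AB)\geq0$, is what makes a worst-exponent Gr\"onwall argument work. Your $K_{\epsilon,\delta}$ is the paper's $\tilde\cG_{\kappa,\rho}$, and it is dominated using $\mathrm{Tr}_V(H_1H_2)\leq\mathrm{Tr}_V(H_1)\mathrm{Tr}_V(H_2)$ for $H_1,H_2\in\cH_V$. Pointwise convergence (i) then comes from integrating the radial equation up to $r=1$, whose coefficients stay bounded for fixed $\zeta\neq1$, rather than from smoothness of explicit KZ solutions.
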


This result strengthens \cite[Theorem 1]{Bav26} as it constructs a sample space for the Kac--Moody unitarising measure which is intrinsic to the circle, rather than some space of holomorphic forms in the disc. The key observation is to map holomorphic forms to Hermitian Yang--Mills metrics, which converge to a sound (albeit rough) mathematical object as we renormalise when approaching the boundary. 

 The proof of Theorem \ref{thm:convergence} is a standard argument once we have an expression for the one- and two-point correlation functions. Remarkably, we can get explicit expressions using the rich algebraic structure of the theory: we will see that the correlation functions solve a one dimensional version of the Knizhnik--Zamolodchikov equations (see \cite[Chapter~15]{dFMS_CFT-book} or \cite{EFK_KZ-lectures} for comprehensive reviews). In particular, we recover the conformal weights of the WZW model \cite[eq. (15.87)]{dFMS_CFT-book}.\footnote{In the right-hand side of this equation, the numerator is the Casimir eigenvalue \cite[eq. (13.127)]{dFMS_CFT-book}, $g$ is the dual Coxeter number \cite[eq. (13.35)]{dFMS_CFT-book}, and $k$ is the level.} In an ongoing programme with Baptiste Cercl\'e and Antoine Jego, we plan to construct higher dimensional versions of this multiplicative chaos and put these emergent connections with conformal field theory and gauge theory on firmer ground.

    \subsection{Discussion}

Our strategy for the proof of Theorem \ref{thm:convergence} is based on second moment computations, and is valid for the range of $\kappa$-values known to experts in multiplicative chaos as the $L^2$-phase. Hence, the bound $\kappa<-\check h-2\lambda_\rho^+$ is suboptimal, and the value $\kappa=-\check h-2\lambda_\rho^+$ is not a true phase transition, but simply the point where the limit measure fails to admit second moments and the method of proof stops working. To the best of our knowledge, there is no model of matrix-valued multiplicative chaos available in the full $L^1$-phase at the time of writing. Indeed, we are only aware of the paper \cite{CRV13} which is also restricted to the $L^2$-phase. The authors consider general isotropic Gaussian fields and derive suitable bounds on the moments using results from random matrix theory. By contrast, we work with a very specific non-Gaussian model from which we exploit the Kac--Moody symmetry to derive explicit formulas.

Going beyond the $L^2$-phase typically requires understanding ``where the measure lives" \cite{RV14-review,Berestycki}, which in our case amounts to establishing bounds on the probability that the spectrum of $\varrho(H(z))$ is unusually large. More generally, one would like to know the multifractal properties of the measure, or (which is essentially the same) compute for each $\alpha\in\R$ the Hausdorff dimension of $\cT_\alpha:=\lbrace z\in\S^1|\,\lim_{\epsilon\to0}\frac{\log\mathrm{Tr}_V(\varrho(H(e^{-\epsilon}z))}{\log(1/\epsilon)}=\alpha\rbrace$.

One way to approach this type of question is via the Mellin transform of $\varrho(H)$: a back-of-the-envelope computation following Proposition \ref{prop:one-point} suggests that the quantity $\int_{\cM_\D}\mathrm{Tr}_V(\varrho(H(e^{-\epsilon}z))^s)\d\nu_\kappa(H)$ behaves like $\epsilon^\frac{s^2\Omega_\rho}{\kappa+s^2\check h}$ for complex $s$ as $\epsilon\to0$ with $z\in\S^1$ fixed, so one might expect a multifractal spectrum which is not a polynomial, but rather a rational function. This is in sharp contrast with the Gaussian case \cite[Section 4]{RV14-review}.

In a somewhat orthogonal direction, one could try to construct an ``imaginary" version of this multiplicative chaos, which would be an $\mathrm{End}(V)$-valued distribution on the circle. This point of view appears when one sees the Kac--Moody unitarising measure as a (formal) measure on the loop group of $G$ rather than Hermitian Yang--Mills metrics (this was the point of view initially adopted in \cite{Bav26}). However, due to the renormalisation, there is no chance that the limit measure will take values in the space of unitary operators on $V$; this is already true in the Abelian case since the standard imaginary chaos is a real-valued distribution \cite{LRV15,JSW20}, hence does not take values in $\S^1$.

\subsection*{Acknowledgements} We thank Nathana\"el Berestycki, Baptiste Cercl\'e, Antoine Jego, R\'emi Rhodes and Eero Saksman for helpful comments. This work was supported by the National Natural Science Foundation of China (Grant No. 12526204).

   \section{Preliminaries}\label{sec:preliminaries}

    \subsection{Lie groups}\label{subsubsec:lie-groups}
Here we recall some elementary facts on Lie groups and their representations, as can be found in any standard textbook such as \cite{Knapp}. See also \cite{EFK_KZ-lectures,dFMS_CFT-book} for CFT-oriented references. Our conventions and notations follow \cite[Section~2.1]{Bav26}.
    
We fix a compact connected Lie group $G$ with complexification $G_\C$, and we assume without loss of generality that it is semisimple. The respective Lie algebras are denoted $\kg$, $\kg_\C$. There is an antiholomorphic Cartan involution $\theta:G_\C\to G_\C$ with fixed point set $G$, which is a group automorphism of $G_\C$ (viewed as a real Lie group). 
Moreover, the map $\kg\times G\to G_\C,\,(x,g)\mapsto\exp(ix)g$ is a diffeomorphism \cite[Theorem 6.31]{Knapp}, giving us the identification $G_\C/G\simeq\exp(i\kg)$. We will also write $g^*:=\theta(g^{-1})$ for $g\in G_\C$, which coincides with the Hermitian adjoint in the case $G_\C=\mathrm{SL}_N(\C)$. At the level of the Lie algebra, the Cartan involution gives a Lie algebra automorphism with fixed set $\kg$, which we keep denoting $\theta$ by a slight (but common) abuse of notation. Given $\alpha\in\kg_\C$, we write $\alpha^*:=-\theta(\alpha)$.

Recall that $G_\C$ acts on itself by conjugation, leading to the adjoint representation of $G_\C$ on $\kg_\C$ by differentiating at the identity, i.e. $\mathrm{Ad}_g(\alpha)=\frac{\d}{\d\epsilon}_{|\epsilon=0}ge^{\epsilon\alpha}g^{-1}$ for all $g\in G_\C$ and $\alpha\in\kg_\C$. We will frequently use the shorthand $\Ad_g(\alpha)=g\alpha g^{-1}$. By differentiating again at $g=1_{G_\C}$, one gets the adjoint representation at the Lie algebra level $\mathrm{ad}:\kg_\C\to\mathrm{End}(\kg_\C)$, which is defined by $\mathrm{ad}_x(\alpha)=[x,\alpha]$ for all $x,\alpha\in\kg_\C$.

Let $\rho$ be the complexification of a unitary representation of $\kg$ on some finite dimensional complex Hilbert space $(V,\langle\cdot,\cdot\rangle_V)$. Then, $\rho$ induces an invariant symmetric bilinear form $\kappa_\rho$ on $\kg_\C$ defined by $\kappa_\rho(x,y)=\mathrm{Tr}_V(\rho(x)\circ\rho(y))$. All these bilinear forms are proportional to each other and non-degenerate for semisimple $\kg_\C$, and we let $\tr$ be the one which is normalised so that the longest root has squared length~$2$. Once and for all, we fix $\kB\subset i\kg$ an orthonormal basis for $\tr$, i.e. $\tr(a,b)=\delta_{a,b}$ for all $a,b\in\kB$. The quadratic Casimir operator of a representation $\rho$ is $\Omega_\rho:=\sum_{b\in\kB}\rho(b)\circ\rho(b)$. If $\rho$ is irreducible, $\Omega_\rho$ is proportional to the identity, and we will freely identify the operator with its eigenvalue. In particular, one can define the dual Coxeter number by the formula $\check h=\frac{1}{2}\Omega_\mathrm{ad}$.  The next lemma records an elementary formula for future reference (we use the same notation for the commutator in $\mathrm{End}(V)$ and the Lie bracket in $\kg_\C$).
\begin{lemma}\label{lem:coxeter}
    For all $\alpha\in\kg_\C$, we have $\sum_{b\in\kB}[\rho(\alpha),\rho(b)]\circ\rho(b)=\check h\rho(\alpha)$.
\end{lemma}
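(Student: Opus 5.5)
We will prove the identity by expanding the commutator and using two facts: $\Omega_\rho$ is central in $\rho(\kg_\C)$, and the adjoint Casimir equals $2\check h$. Since both sides are linear in $\alpha$, it suffices to treat $\alpha\in\kg_\C$ arbitrary but fixed.

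First, write $\rho(\mathrm{ad}_\alpha b)=[\rho(\alpha),\rho(b)]$, and expand $\mathrm{ad}_\alpha b=\sum_{c\in\kB}\tr([\alpha,b],c)\,c$ in the orthonormal basis. This gives
\[
\sum_{b}[\rho(\alpha),\rho(b)]\rho(b)=\sum_{b,c}\tr([\alpha,b],c)\,\rho(c)\rho(b).
\]
The coefficient $f_{bc}:=\tr([\alpha,b],c)$ is antisymmetric in $(b,c)$ by invariance of $\tr$: indeed $\tr([\alpha,b],c)=-\tr(b,[\alpha,c])$. Hence the sum equals
\[
\tfrac12\sum_{b,c}f_{bc}\,[\rho(c),\rho(b)]=\tfrac12\sum_{b,c}f_{bc}\,\rho([c,b]).
\]

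Next, we identify the element $\frac12\sum_{b,c}\tr([\alpha,b],c)[c,b]\in\kg_\C$. We pair it with an arbitrary $y\in\kg_\C$ under $\tr$. Using invariance, $\tr([c,b],y)=\tr(c,[b,y])$, and summing over $c$ reconstructs $[\alpha,b]$. The result is
\[
\tfrac12\sum_b\tr([\alpha,b],[b,y]).
\]
By invariance again this equals $-\tfrac12\sum_b\tr([b,[b,\alpha]],y)=-\tfrac12\tr(\Omega_{\mathrm{ad}}\alpha,y)$, up to a sign to be tracked carefully. Since $\Omega_{\mathrm{ad}}=2\check h$ by definition, the element is $\pm\check h\,\alpha$, and therefore the left-hand side equals $\pm\check h\rho(\alpha)$.

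The main obstacle is pure bookkeeping of signs. The basis $\kB$ lies in $i\kg$, and $\tr$ is bilinear, so we must check that $\Omega_{\mathrm{ad}}=\sum_b\mathrm{ad}_b^2$ is indeed $+2\check h$ with these conventions. As a sanity check, we can compute directly that
\[
\sum_b[\rho(\alpha),\rho(b)]\rho(b)=\rho(\alpha)\Omega_\rho-\sum_b\rho(b)\rho(\alpha)\rho(b),
\]
and that $\sum_b\rho(b)\rho(\alpha)\rho(b)=(\Omega_\rho-\check h)\rho(\alpha)$. The latter is the same computation rewritten, since $\sum_b\rho(b)[\rho(\alpha),\rho(b)]=\sum_b[\rho(b),[\rho(\alpha),\rho(b)]]+\dots$. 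This fixes the sign to $+\check h$, consistent with the statement.
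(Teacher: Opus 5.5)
Your route is sound, and it is essentially a coordinate version of the paper's argument. The antisymmetry of $f_{bc}=\tr([\alpha,b],c)$, which lets you replace $\rho(c)\rho(b)$ by $\frac12[\rho(c),\rho(b)]$, is exactly the centrality of $\Omega_\rho$ that the paper invokes. Your identification of $X=\frac12\sum_{b,c}f_{bc}[c,b]$ through the pairing with $\tr$ is the paper's identity $\sum_b[[\rho(\alpha),\rho(b)],\rho(b)]=\rho(\Omega_{\mathrm{ad}}\alpha)$. Everything up to $\tr(X,y)=\frac12\sum_b\tr([\alpha,b],[b,y])$ is correct.

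\textbf{The gap is the last step.} This is where the whole content of the lemma sits: the coefficient is $+\check h$, and this later produces the shift $\kappa+\check h$ rather than $\kappa-\check h$. You write that step with a minus sign, hedge to $\pm\check h\,\alpha$, and never actually determine the sign. The ``sanity check'' does not repair this. The identity $\sum_b\rho(b)\rho(\alpha)\rho(b)=(\Omega_\rho-\check h)\rho(\alpha)$ is equivalent to the lemma itself, and its justification is hidden behind ``$+\dots$'', so the check is circular as written. Your worry about whether $\Omega_{\mathrm{ad}}=+2\check h$ ``with these conventions'' is also misplaced: the paper \emph{defines} $\check h:=\frac12\Omega_{\mathrm{ad}}$. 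The only sign to track is the one in your use of invariance.

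\textbf{Fixing the sign takes one line.} Invariance in the form $\tr(u,[b,y])=\tr([u,b],y)$ with $u=[\alpha,b]$ gives $\tr([\alpha,b],[b,y])=\tr([[\alpha,b],b],y)$. Moreover $[[\alpha,b],b]=-[b,[\alpha,b]]=[b,[b,\alpha]]=\mathrm{ad}_b^2\alpha$. Hence $\tr(X,y)=+\frac12\tr(\Omega_{\mathrm{ad}}\alpha,y)=\check h\,\tr(\alpha,y)$ for all $y$, so $X=\check h\alpha$ by non-degeneracy.

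Alternatively, complete your sanity check. Let $A$ be your sum and $B=\sum_b\rho(b)[\rho(\alpha),\rho(b)]$. Then $B=\sum_b[\rho(b),[\rho(\alpha),\rho(b)]]+A=-2\check h\rho(\alpha)+A$, while $A+B=[\rho(\alpha),\Omega_\rho]=0$. Together these give $A=\check h\rho(\alpha)$, which is precisely the paper's proof. Note that the paper's text interchanges the two labels: it is $A+B$, not $A-B$, that equals $[\rho(\alpha),\Omega_\rho]$, and $A-B$ that equals $\sum_b[[\rho(\alpha),\rho(b)],\rho(b)]$. The conclusion for $A$ is unaffected.
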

\begin{proof}
    Write $A=\sum_{b\in\kB}[\rho(\alpha),\rho(b)]\circ\rho(b)$ and $B=\sum_{b\in\kB}\rho(b)\circ[\rho(\alpha),\rho(b)]$. On the one hand, $A-B=[\rho(\alpha),\Omega_\rho]=0$ since the Casimir operator is central. On the other hand, $A+B=\sum_{b\in\kB}[[\rho(\alpha),\rho(b)],\rho(b)]=\rho(\Omega_\mathrm{ad}(\alpha))=2\check h\rho(\alpha)$. Hence, $A=\check h\rho(\alpha)$ as required.
\end{proof}

By definition, the tensor product representation on $V\otimes V$ is given by $\rho\otimes\mathrm{Id}_V+\mathrm{Id}_V\otimes\rho$. The quadratic Casimir of the product representation is
\[\Omega_{\rho\otimes\mathrm{Id}_V+\mathrm{Id}_V\otimes\rho}=\sum_{b\in\kB}(\rho(b)\otimes\mathrm{Id}_V+\mathrm{Id}_V\otimes\rho(b))^2=\sum_{b\in\kB}2\rho(b)\otimes\rho(b)+2\Omega_\rho\mathrm{Id}_{V\otimes V}.\]
The decomposition of a tensor product representation as a direct sum of irreducibles is known as the Clebsch--Gordan problem: as representations of $\kg_\C$, one has $V\otimes V\simeq\oplus_{\rho'}m_{\rho,\rho'}V_{\rho'}$, where the sum is over the irreducible unitary representations of $\kg_\C$, each representation $V_{\rho'}$ appearing with multiplicity $m_{\rho,\rho'}\geq0$. Hence, the operator $\sum_{b\in\kB}\rho(b)\otimes\rho(b)$ acts as a multiple of the identity in each $V_{\rho'}\hookrightarrow V\otimes V$, with eigenvalue $\frac{1}{2}\Omega_{\rho'}-\Omega_\rho$. In particular, the spectrum of this operator is bounded above and below respectively by
  \begin{equation}\label{eq:lambda_rho}
  \lambda_\rho^+:=\frac{1}{2}\max\{\Omega_{\rho'}|\,m_{\rho,\rho'}\neq0\}-\Omega_\rho\qand\lambda_\rho^-:=\frac{1}{2}\min\{\Omega_{\rho'}|\,m_{\rho,\rho'}\neq0\}-\Omega_\rho.
  \end{equation}
  The operator $\sum_{b\in\kB}\rho(b)\otimes\rho(b)$ is fundamental in the theory of the Knizhnik--Zamolodchikov equations; see \cite[Lecture~2.5]{EFK_KZ-lectures} for additional details.

    \subsection{Hermitian Yang--Mills metrics}\label{subsec:HYM}
Let $DG_\C$ be the group of holomorphic $G_\C$-valued functions on $\D$, and 
\[D_0G_\C:=\left\lbrace g\in DG_\C|\,g(0)=1_{G_\C}\right\rbrace.\]
Following \cite{Donaldson}, a Hermitian Yang--Mills metric in the trivial holomorphic bundle $\D\times G_\C$ is a smooth function $H:\D\to\exp(i\kg)$ satisfying the $2d$ Yang--Mills equations
\begin{equation}\label{eq:hym}
\delbar(H^{-1}\del H)=0\qand\del(\delbar HH^{-1})=0.
\end{equation}
However, contrary to Donaldson, we do not impose any kind of smoothness up to the boundary. The Yang--Mills equations \eqref{eq:hym} imply the factorisation $H=g^*g$ for some $g\in DG_\C$ \cite[Section 3.3]{Donaldson}, which is unique up to left multiplication by $G$. We define
\[\cM_\D:=\left\lbrace H\in C^\infty(\D;\exp(i\kg))\text{ Hermitian Yang--Mills with }\,H(0)=1_{G_\C}\right\rbrace.\]
For $H\in\cM_\D$, the above factorisation is then unique if we impose $g(0)=1_{G_\C}$. To sum up, the action of $D_0G_\C$ on $\cM_\D$ defined by $(H,g)\mapsto g^*Hg$ is both free and transitive. For what follows, a better point of view is to observe that the group $\{g\in DG_\C|\,g(0)\in G\}$ also acts transitively on $\cM_\D$ via the same formula, with stabiliser $G$ at the constant metric. 


Let now $\cA$ be the space of holomorphic $\kg_\C$-valued $(1,0)$-forms in $\D$. We can introduce coordinates on $\cA$ using the coefficients of the power series expansion about 0:
\begin{equation}\label{eq:coeffs}
\alpha(z)=\sum_{b\in\kB,m\geq1}\alpha_{b,m}z^{m-1}.
\end{equation}
Given $H=g^*g\in\cM_\D$, we have by definition $H^{-1}\del H=g^{-1}\del g\in\cA$. Every $\alpha\in\cA$ can be viewed as a flat connection in the trivial bundle $\D\times G_\C$, so there exists a unique $g\in D_0G_\C$ such that $g^{-1}\del g=\alpha$. Hence, we get a one-to-one correspondence between $\cM_\D$ and $\cA$ given by $H\mapsto H^{-1}\del H$. In particular, we can endow $\cM_\D$ with the local uniform topology inherited from $\cA$, together with its Borel $\sigma$-algebra. 

        \subsection{Kac--Moody unitarising measures}\label{subsubsec:KMU}
Let $\kappa<-2\check h$ and $L^\omega\kg_\C$ be the loop algebra, i.e. the Lie algebra of $\kg_\C$-valued holomorphic functions defined in the neighbourhood of $\S^1$. The Kac--Moody unitarising measure $\nu_\kappa$ is a Borel probability measure on $\cA$ which was introduced in \cite{Bav26}. Its precise definition is of secondary importance compared to the integration by parts formula that it satisfies, which is recalled in \eqref{eq:ipp}
 below.
 
 Following the discussion of Section \ref{subsec:HYM}, we can lift $\nu_\kappa$ to a $G$-invariant probability measure on $\cM_\D$ in the following way. Let $\d\gamma$ denote the Haar measure on $G$ and view $\nu_\kappa\otimes\d\gamma$ as a probability measure on $\{g\in DG_\C|\,g(0)\in G\}$ (using the map $D_0G_\C\times G\ni(g,\gamma)\mapsto g\gamma$). We can then consider the random metric $\gamma^*g^*g\gamma=\gamma^{-1}g^*g\gamma$, whose law is invariant under global conjugation by $G$ (this will ensure that the one-point function is proportional to the identity, see Proposition \ref{prop:one-point}). In order to avoid multiple notations, we will henceforth identify $\nu_\kappa$ with this $G$-invariant measure on $\cM_\D$ (this will cause no confusion since the correlation functions introduced in \eqref{eq:def-correls} are invariant under global conjugation).

In \cite[Section~3]{Bav26}, we introduced a commuting pair of representations of the loop algebra $(\cJ_u,\bar\cJ_u)_{u\in L^\omega\kg_\C}$ acting as (unbounded) differential operators on $L^2(\nu_\kappa)$. Given $b\in\kB$ and $m\in\Z$, we use the shorthand $\cJ_{b,m}:=\cJ_{b\otimes z^m}$. The infinitesimal form of \cite[Theorem~1.1]{Bav26} is the following integration by parts formula. Let $\hat D_\infty^\omega\kg_\C\subset L^\omega\kg_\C$ be the Lie algebra of $\kg_\C$-valued holomorphic functions in the neighbourhood of $\hat\C\setminus\D$ which vanish at $\infty$. Then, for all $u\in\hat D^\omega_\infty\kg_\C$ and all $F$ in the domain of $\cJ_u$, we have
\begin{equation}\label{eq:ipp}
\int_{\cM_\D}\cJ_u(F)\d\nu_\kappa=-\kappa\int_{\cM_\D}\alpha(u)F\d\nu_\kappa,
\end{equation}
where $\alpha$ is the differential form on $\cM_\D$ defined by 
\[\alpha_H(u):=\frac{1}{2i\pi}\oint_{r\S^1}\tr(uH^{-1}\del H),\]
and $r\in(0,1)$ is chosen large enough such that $r\S^1$ is contained in the domain of analyticity of $u$. The conjugate formula holds for the representation $(\bar\cJ_u)_{u\in L^\omega\kg_\C}$. We will freely identify $\alpha$ (viewed as a differential form on $\cM_\D$) with the holomorphic form $H^{-1}\del H=g^{-1}\del g\in\cA$.

        \subsection{\texorpdfstring{$\cH_V$-valued measures on the circle}{Positive matrix-valued measures on the circle}}\label{subsubsec:measures}

Let $(V,\langle\cdot,\cdot\rangle_V)$ be a finite dimensional complex Hilbert space and $\cH_V$ be the space of positive definite Hermitian endomorphisms of $V$. Recall that $\cH_V$ comes with a partial order. Let $\mathfrak M(\S^1;\cH_V)$ be the space of positive Borel measures on $\S^1$ with values in $\cH_V$. By definition, an element $M\in\mathfrak M(\S^1;\cH_V)$ is a countably additive non-negative set function on the Borel sets of $\S^1$ with values in $\cH_V$. Note that $M$ is also described by the (finite) family of its matrix coefficients, which are signed measures on $\S^1$. Moreover, the measure $\mathrm{Tr}_V(M)$ is a positive real measure, and all the matrix coefficients are absolutely continuous with respect to it. 

The space $\mathfrak M(\S^1;\cH_V)$ is endowed with the weak topology, by which a sequence $(M^n)_{n\geq0}$ converges to $M$ if $M^n(A)\to M(A)$ for all Borel sets $A\subset\S^1$. Equivalently, $(M^n)_{n\geq0}$ converges weakly if each matrix coefficients converge weakly. An elementary generalisation of Prokhorov's theorem is that the sequence $(M^n)_{n\geq0}$ admits weakly convergent subsequences if the sequence of total masses $\mathrm{Tr}_V(M^n(\S^1))_{n\geq0}$ is bounded. 

Let $(X,\mathcal{X},\nu)$ be a probability space. A random $\cH_V$-valued measure on $\S^1$ is a measurable map $M:X\to\mathfrak M(\S^1;\cH_V)$. We say that a family of random measures $(M^\epsilon)_{\epsilon>0}$ converges weakly in probability to some random measure $M$ if all its matrix coefficients converge weakly in probability in the sense of \cite{Berestycki}. 

\section{Proof of Theorem \ref{thm:convergence}}\label{sec:proof}

In the whole section, we assume fixed $\varrho:G_\C\to\mathrm{GL}(V)$ as in the statement of Theorem~\ref{thm:convergence}. The induced Lie algebra representation is denoted $\rho:\kg_\C\to\mathrm{End}(V)$. Given $z_1,...,z_N\in\D$ pairwise distinct, we introduce the correlation function
\[\left\langle\prod_{j=1}^N\tr(H(z_j))\right\rangle_{\kappa,\rho}:=\int_{\cM_\D}\prod_{j=1}^N\mathrm{Tr}_V\left(\varrho(H(z_j))\right)\d\nu_\kappa(H).\]
In fact, we will only be concerned with the case $N\in\{1,2\}$, and we define:
\begin{equation}\label{eq:def-correls}
\cZ_{\kappa,\rho}(z):=\langle\tr(H(z))\rangle_{\kappa,\rho}\qand\cG_{\kappa,\rho}(z_1,z_2):=\frac{\langle\tr(H(z_1))\tr(H(z_2))\rangle_{\kappa,\rho}}{\cZ_{\kappa,\rho}(z_1)\cZ_{\kappa,\rho}(z_2)}.
\end{equation}



    \subsection{Some variational formulas}\label{subsec:variational-formula}

This section gives some technical formulas which will be used repeatedly. Lemma \ref{lem:var1} is the key result explaining the shift of the level by the dual Coxeter number. In the statement, we fix $z\in\D$ and view $g(z)$ as the evaluation map on $\cM_\D$ sending $H=g^*g$ to $g(z)\in G_\C$.
\begin{lemma}\label{lem:var1}
For all $H=g^*g\in\cM_\D$ and all $z\in\D$, we have the equality in $\mathrm{End}(V)$
    \[\sum_{b\in\kB,m\geq1}z^{m-1}\varrho(H(z))\circ\rho(g^{-1}\cJ_{b,-m}g(z))\circ\rho(b)=\check h\varrho(H(z))\circ\rho(\alpha(z)),\]
    with $\alpha=H^{-1}\del H\in\cA$.
\end{lemma}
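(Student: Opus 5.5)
The plan is to compute $g^{-1}\cJ_{b,-m}g(z)$ explicitly, sum the series over $m$ in closed form, and reduce the resulting Casimir-type sum to Lemma~\ref{lem:coxeter}.

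\textbf{Step 1: the variation $g^{-1}\cJ_ug(z)$.} I would first recall from \cite[Section~3]{Bav26} how $\cJ_u$ acts on the evaluation map $H\mapsto g(z)$ for $u\in\hat D^\omega_\infty\kg_\C$. By construction, $\cJ_u$ is the derivative along the (left) action of the loop group. This action is made sense of on $D_0G_\C$ by a Riemann--Hilbert factorisation: $e^{tu}g=k_tg_t$, with $k_t$ holomorphic outside $\D$ and $g_t\in D_0G_\C$. Differentiating at $t=0$ should give
\[g^{-1}\cJ_ug(z)=P_+\big(\Ad_{g^{-1}}u\big)(z),\]
where $P_+$ is the projection onto functions holomorphic in $\D$ vanishing at $0$. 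By the Cauchy formula,
\[g^{-1}\cJ_ug(z)=\frac{1}{2i\pi}\oint_{r\S^1}\Ad_{g(w)^{-1}}u(w)\Big(\frac{1}{w-z}-\frac1w\Big)\d w,\qquad |z|<r<1.\]
Pinning down this formula, including signs and the normalisation at $0$, is the step I expect to be the main obstacle. The sign conventions of \cite{Bav26} (left versus right action, and the role of $g(0)=1_{G_\C}$) must be matched exactly, otherwise $\check h$ appears with the wrong sign.

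\textbf{Step 2: summing the series.} Specialise to $u=b\otimes w^{-m}$ and multiply by $z^{m-1}$. For $|z|<|w|$ the geometric series gives $\sum_{m\geq1}z^{m-1}w^{-m}=\frac{1}{w-z}$. The left-hand side of the lemma therefore becomes $\varrho(H(z))$ composed with the contour integral
\[\frac{1}{2i\pi}\oint_{r\S^1}\Big(\frac{1}{(w-z)^2}-\frac{1}{w(w-z)}\Big)\sum_{b\in\kB}\rho\big(\Ad_{g(w)^{-1}}b\big)\circ\rho(b)\,\d w.\]
Set $X(w):=\sum_{b\in\kB}\rho(\Ad_{g(w)^{-1}}b)\circ\rho(b)$, which is holomorphic in $\D$. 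The residue theorem then gives
\[X'(z)-\frac{X(z)-X(0)}{z}.\]
I expect the second term to be absent once the normalisation in Step 1 is handled correctly. Otherwise it must cancel against the structure of the expression, to be checked at that point.

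\textbf{Step 3: evaluating $X'(z)$.} Using $g^{-1}\del g=\alpha$, we get $\del_w\big(\Ad_{g(w)^{-1}}b\big)=-[\alpha(w),\Ad_{g(w)^{-1}}b]$, so
\[X'(z)=-\sum_{b\in\kB}\big[\rho(\alpha(z)),\rho(\Ad_{g(z)^{-1}}b)\big]\circ\rho(b).\]
The remaining task is to compare this with $\check h\,\rho(\alpha(z))$. The idea is that $\sum_b\Ad_k b\otimes\Ad_k b=\sum_b b\otimes b$ by invariance of $\tr$. After moving the $g(z)$ factors around, using the prefactor $\varrho(H(z))=\varrho(g(z)^*)\varrho(g(z))$ and the fact that $\kB\subset i\kg$, the sum can be brought into the form $\sum_b[\rho(\alpha),\rho(b)]\circ\rho(b)$. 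Lemma~\ref{lem:coxeter} then yields $\check h\,\rho(\alpha(z))$, up to the conjugations, which should exactly absorb the $\varrho(H(z))$ prefactor.

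If this direct bookkeeping is unclear, I would first verify the identity for constant $g$, or to first order in $\alpha$ around $g=1_{G_\C}$. There the computation reduces literally to Lemma~\ref{lem:coxeter}, and this check fixes the signs and conventions of Step 1 before the general case is done.
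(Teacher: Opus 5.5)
Your architecture matches the paper's: write $g^{-1}\cJ_{b,-m}g(z)$ as a Cauchy integral, sum the geometric series to get a double pole at $\zeta=z$, take the residue, and finish with Lemma~\ref{lem:coxeter}. The gap is in Step~1, and it is not a matter of signs or normalisation. The conjugation in your integrand is wrong, and as a result Step~3 cannot be completed.

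Two things go wrong in Step~1. First, the factorisation you write, $e^{tu}g=k_tg_t$, gives a trivial action for $u\in\hat D^\omega_\infty\kg_\C$: take $k_t=e^{tu}$ and $g_t=g$. Your formula $P_+(\Ad_{g^{-1}}u)$ would instead come from $e^{tu}g=g_tk_t$. Second, neither of these is the action in the paper. There, $\cJ_u$ is the holomorphic part of the motion $g\mapsto e^{-\epsilon\tilde u-\bar\epsilon\tilde u^\dagger}ge^{\epsilon u}$, with $\tilde u$ solving $\delbar\tilde u=\Ad_g(\delbar u)$ and $\tilde u(\infty)=0$. This gives
\[
g^{-1}\cJ_ug(z)=u(z)-\Ad_{g(z)^{-1}}\tilde u(z)=\frac{i}{2\pi}\oint_{r\S^1}\Big(u(\zeta)-\Ad_{g(z)^{-1}g(\zeta)}u(\zeta)\Big)\frac{\d\zeta}{\zeta-z}.
\]
The conjugation here is the \emph{relative} one, $\Ad_{g(z)^{-1}g(\zeta)}$, and there is no subtraction at $0$. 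After summing over $m$, the function $F(\zeta)=\Ad_{g(z)^{-1}g(\zeta)}b$ satisfies $F(z)=b$ and $F'(z)=[\alpha(z),b]$. So the double-pole residue is exactly $[\alpha(z),b]$, and Lemma~\ref{lem:coxeter} applies with nothing left over.

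With your integrand, Step~3 fails. Invariance of the Casimir tensor gives $\sum_b\Ad_{g^{-1}}b\otimes b=\sum_b b\otimes\Ad_g b$, hence
\[
X'(z)=-\sum_{b\in\kB}[\rho(\alpha(z)),\rho(b)]\circ\varrho(g(z))\circ\rho(b)\circ\varrho(g(z))^{-1}.
\]
The factor $\varrho(g(z))$ is trapped between the two copies of $\rho(b)$. The prefactor cannot absorb it: $\varrho(H(z))$ is invertible, so the lemma is equivalent to the same identity with the prefactor removed, and the prefactor plays no role.

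Here is a concrete check. Take $\mathfrak{sl}_2$ acting on $\C^2$ and $g(w)=\exp(w\,\mathrm{diag}(c,-c))$. Then $X'(z)=-2c\,\mathrm{diag}(e^{-2cz},-e^{2cz})$, which is not $\pm\check h\rho(\alpha(z))=\pm 2c\,\mathrm{diag}(1,-1)$. The extra term $-(X(z)-X(0))/z$ does not vanish either. Your proposed sanity checks would not catch this. For constant $g$ both sides are zero. To first order in $\alpha$, the difference between $\Ad_{g(z)^{-1}}b$ and $b$ inside $[\alpha,\cdot]$ is second order and so invisible.
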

In the statement, $\cJ_{b,-m}g(z)$ is a tangent vector above $g(z)\in G_\C$, and $g^{-1}\cJ_{b,-m}g(z)$ is the transport of this vector back to the tangent space at $1_{G_\C}$, to which we can apply $\rho$. For readability in the proof of this lemma, we will treat $G$ as a matrix group and omit the dependence on the choice of representation. With this convention, the quantity we seek to compute can be written more simply as $\sum_{b\in\kB,m\geq1}z^{m-1}g^*(z)\cJ_{b,-m}(g(z))b$. The methodology will be the same as the one used in \cite[Section 5.1]{Bav26}, to which we refer for additional details.

\begin{proof}
 Let $u\in\hat D^\omega_\infty\kg_\C$ and extend it arbitrarily to some smooth function on $\hat\C$. Then, $u$ generates a small motion of $g$ of the form $e^{-\epsilon\tilde u-\bar\epsilon\tilde u^\dagger}ge^{\epsilon u}$ where $\tilde u$ is the solution to 
    \[\delbar\tilde u=\Ad_g(\delbar u)\text{ on }\hat\C\qand\tilde u(\infty)=0,\]
    and $\tilde u^\dagger(z)=\theta(\tilde u(1/\bar z))$.
    Explicitly, we have for all $z$ away from the support of $\delbar u$,
    \begin{equation}\label{eq:tilde-u}
    \tilde u(z)=\frac{i}{2\pi}\oint g(\zeta)u(\zeta)g^{-1}(\zeta)\frac{\d\zeta}{\zeta-z},
    \end{equation}
    where the contour surrounds the support of $\delbar u$ and disconnects $z$ from it. By Cauchy's integral formula and elementary contour deformation, we then get for all $z\in\D$:
    \[u(z)-\Ad_{g^{-1}(z)}(\tilde u(z))=\frac{i}{2\pi}\oint_{r\S^1}\left(u(\zeta)-g^{-1}(z)g(\zeta)u(\zeta)g^{-1}(\zeta)g(z)\right)\frac{\d\zeta}{\zeta-z},\]
    where $r\in(|z|,1)$ is chosen arbitrarily close to $1$, so that $r\S^1$ is contained in the domain of analyticity of $u$. Applying this to the canonical basis, we get for each $b\in\kB$:
    \begin{equation}\label{eq:residue-formula}
    \begin{aligned}
        \sum_{m=1}^\infty\cJ_{b,-m}(g(z))z^{m-1}
        &=\frac{i}{2\pi}\sum_{m=1}^\infty\oint_{r\S^1}g(z)\left(b-g^{-1}(z)g(\zeta)bg^{-1}(\zeta)g(z)\right)\frac{z^{m-1}\d\zeta}{\zeta^m(\zeta-z)}\\
        &=\frac{i}{2\pi}\oint_{r\S^1}g(z)\left(b-g^{-1}(z)g(\zeta)bg^{-1}(\zeta)g(z)\right)\frac{\d\zeta}{(\zeta-z)^2}\\
        &=g(z)[g^{-1}\del g(z),b]=g(z)[\alpha(z),b].
    \end{aligned}
    \end{equation}
In the last line, we applied the residue theorem to the meromorphic form in $\D$ having a unique double pole at $\zeta=z$. Summing over $b\in\kB$ and applying Lemma \ref{lem:coxeter}, we get
\[\sum_{b\in\kB,m\geq1}z^{m-1}g^*(z)\cJ_{b,-m}(g(z))b=\sum_{b\in\kB}H(z)[\alpha(z),b]b=\check hH(z)\alpha(z),\]
ending the proof.
\end{proof}

\begin{lemma}\label{lem:var2}
    For all $H\in\cM_\D$ and all $z\in\D$, we have the equality in $\mathrm{End}(V)$
    \[\sum_{b\in\kB,m\geq1}z^{m-1}\rho(\cJ_{b,-m}(g^*)\theta g(z))\circ\varrho(H(z))\circ\rho(b)=\frac{-\Omega_\rho}{z-\bar z^{-1}}\varrho(H(z)).\]
\end{lemma}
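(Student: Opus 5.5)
We mirror the proof of Lemma~\ref{lem:var1}, now tracking how $\cJ_u$ acts on the antiholomorphic factor $g^*$ instead of $g$. As there, we treat $G$ as a matrix group and suppress $\rho$. The quantity to compute is then $\sum_{b,m}z^{m-1}\,\cJ_{b,-m}(g^*)(z)\,\theta(g(z))\,H(z)\,b$.

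\textbf{Step 1: variation of $g^*$.} The infinitesimal motion generated by $u\in\hat D^\omega_\infty\kg_\C$ is $g\mapsto e^{-\epsilon\tilde u-\bar\epsilon\tilde u^\dagger}ge^{\epsilon u}$. For $\cJ_u$, which is the $\epsilon$-derivative, we keep only the terms linear in $\epsilon$. Taking $*$ gives
\[g^*\mapsto e^{\bar\epsilon u^*}g^*e^{-\epsilon(\tilde u^\dagger)^*-\bar\epsilon\tilde u^*}.\]
The $\epsilon$-linear part of $\cJ_u(g^*)$ is therefore $-g^*(\tilde u^\dagger)^*$. Using $\tilde u^\dagger(z)=\theta(\tilde u(1/\bar z))$ and $\alpha^*=-\theta(\alpha)$, this equals $g^*(z)\,\tilde u(1/\bar z)$, up to a sign that I will fix carefully. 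Multiplying on the right by $\theta g(z)=(g^*(z))^{-1}$ transports it back to the identity, so
\[\cJ_u(g^*)\theta g(z)=\pm\,g^*(z)\,\tilde u(1/\bar z)\,g^*(z)^{-1}.\]

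\textbf{Step 2: evaluate $\tilde u$ at $1/\bar z$.} The point $1/\bar z$ lies outside $\bar\D$, where $\tilde u$ is given by the contour formula \eqref{eq:tilde-u}. Take $u=b\,\zeta^{-m}$ and sum $\sum_{m\ge1}z^{m-1}\zeta^{-m}=(\zeta-z)^{-1}$ for $|z|<|\zeta|=r$, exactly as in \eqref{eq:residue-formula}. This gives
\[\sum_{m\geq1}z^{m-1}\tilde u_{b,m}(1/\bar z)=\frac{i}{2\pi}\oint_{r\S^1}g(\zeta)bg^{-1}(\zeta)\frac{\d\zeta}{(\zeta-z)(\zeta-\bar z^{-1})}.\]
The integrand is holomorphic inside $r\S^1$ except for a simple pole at $\zeta=z$; the pole at $\bar z^{-1}$ lies outside. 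By the residue theorem the integral equals, up to the factor $\frac{i}{2\pi}\cdot2i\pi=-1$,
\[\frac{g(z)bg^{-1}(z)}{z-\bar z^{-1}}.\]
This is where the factor $(z-\bar z^{-1})^{-1}$ in the statement comes from. It is the main step, and the one where signs and orientation conventions must match those of Lemma~\ref{lem:var1}.

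\textbf{Step 3: assemble.} Conjugating by $g^*(z)$ yields $g^*(z)g(z)\,b\,g^{-1}(z)g^*(z)^{-1}=H(z)bH(z)^{-1}$. Multiplying on the right by $H(z)\,b$ and summing over $b\in\kB$ gives
\[\sum_b H(z)bH(z)^{-1}H(z)b=H(z)\,\Omega_\rho,\]
with overall coefficient $\pm(z-\bar z^{-1})^{-1}$. The main obstacle is sign bookkeeping: the sign in $\cJ_u(g^*)$ from the $*$-operation, the conventions $\alpha^*=-\theta(\alpha)$ and $\theta(b)=-b$ for $b\in i\kg$, and the orientation factor $\frac{i}{2\pi}$. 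Combined, these should produce the stated sign $-\Omega_\rho$. I would fix this by checking the case $g=1$, where $H=1$ and $\tilde u=u$ outside $\D$, and the identity reduces to a direct computation.
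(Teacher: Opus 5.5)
Your proposal is correct and follows the paper's proof step for step. You take the $\epsilon$-linear part of $e^{\bar\epsilon u^*}g^*e^{-\bar\epsilon\tilde u^*-\epsilon\tilde u^{\dagger*}}$, evaluate $\tilde u(1/\bar z)$ with the contour formula \eqref{eq:tilde-u}, sum the geometric series, take the residue at $\zeta=z$, and conclude with $\sum_{b}H(z)bH(z)^{-1}H(z)b=\Omega_\rho H(z)$.

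The sign hedging is unnecessary, because your own computations already fix the sign. Since $\theta$ is an involution, $\tilde u^{\dagger*}(z)=-\theta\bigl(\theta(\tilde u(1/\bar z))\bigr)=-\tilde u(1/\bar z)$, so $\cJ_u(g^*)(z)=+g^*(z)\tilde u(1/\bar z)$ exactly. Combined with the factor $\frac{i}{2\pi}\cdot 2i\pi=-1$ from your residue step, this gives $-\Omega_\rho/(z-\bar z^{-1})$ as stated. Note also that a check at $g=1$ would only confirm the residue sign, not the one coming from the $*$-operation.
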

In the statement, $\cJ_{b,-m}(g^*(z))$ is a tangent vector above $g^*(z)\in G_\C$, and the quantity $\cJ_{b,-m}(g^*)\theta g(z)=\cJ_{b,-m}(g^*)g^{*-1}(z)$ is the transport of this vector back to the tangent space at $1_{G_\C}$ (i.e. the Lie algebra $\kg_\C$), to which we can apply $\rho$. As in the previous proof, we will omit the dependency on the representation. With this convention, the quantity we seek to compute is $\sum_{b\in\kB,m\geq1}z^{m-1}\cJ_{b,-m}(g^*(z))g(z)b$.

\begin{proof}
 With the notations from the previous proof, the small variation of $g^*$ is of the form $e^{\bar\epsilon u^*}g^*e^{-\bar\epsilon\tilde u^*-\epsilon\tilde u^{\dagger*}}$. Note that $\tilde u^{\dagger*}(z)=-\tilde u(1/\bar z)$, which is antiholomorphic in $\D$ since $\tilde u$ is holomorphic in $\D^*$ (recall \eqref{eq:tilde-u} for its definition). Applying this to the canonical basis and summing over $m\geq1$ for fixed $b\in\kB$, we get


    \begin{equation}\label{eq:var-g-star}
    \begin{aligned}
    \sum_{m=1}^\infty\cJ_{b,-m}(g^*(z))z^{m-1}
    &=\frac{i}{2\pi}\sum_{m=1}^\infty\oint_{\S^1}g^*(z)g(\zeta)bg^{-1}(\zeta)\frac{z^{m-1}\d\zeta}{\zeta^m(\zeta-\bar z^{-1})}\\
    &=\frac{i}{2\pi}\oint_{\S^1} g^*(z)g(\zeta)bg^{-1}(\zeta)\frac{\d\zeta}{(\zeta-z)(\zeta-\bar z^{-1})}\\
    &=\frac{-1}{z-\bar z^{-1}}H(z)bg^{-1}(z).
    \end{aligned}
    \end{equation}
We applied the residue theorem in the last line. Summing over $b\in\kB$ yields
\[\sum_{b\in\kB,m\geq1}\cJ_{b,-m}(g^*(z))g(z)b=\frac{-1}{z-\bar z^{-1}}\sum_{b\in\kB}H(z)bb=\frac{-\Omega_\rho}{z-\bar z^{-1}}H(z),\]
ending the proof.
\end{proof}

    \subsection{One- and two-point functions}\label{subsec:correlations}
In this section, we prove exact formulas and bounds for the correlation functions defined in \eqref{eq:def-correls}.

The next proposition shows that the one-point function is a power of the Poincar\'e metric in $\D$, signalling the conformal invariance of the theory. As mentioned in the introduction, the exponent matches the conformal weights of the WZW model.

\begin{proposition}\label{prop:one-point}
    For all $z\in\D$, we have the equality in $\mathrm{End}(V)$
    \[\int_{\cM_\D}\varrho(H(z))\d\nu_\kappa(H)=(1-|z|^2)^\frac{\Omega_\rho}{\kappa+\check h}\mathrm{Id}_V.\]
\end{proposition}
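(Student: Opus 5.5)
We reduce the matrix identity to a scalar ODE in $|z|^2$ by applying the integration by parts formula \eqref{eq:ipp} to a vector field built from the currents $\cJ_{b,-m}$, weighted by $z^{m-1}\rho(b)$.

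\emph{Reduction to a scalar.} Write $F(z):=\int_{\cM_\D}\varrho(H(z))\d\nu_\kappa(H)$. Since $\nu_\kappa$ is invariant under global conjugation by $G$ and $\varrho$ is irreducible, Schur's lemma gives $F(z)=f(z)\mathrm{Id}_V$ for a scalar function $f$. Moreover $H(0)=1_{G_\C}$, so $f(0)=1$. Rotation invariance of $\nu_\kappa$ (I expect $\nu_\kappa$ to be invariant under $z\mapsto e^{i\phi}z$) makes $f$ radial. The goal is then an ODE for $f$, obtained by computing $\del_z F$.

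\emph{Computing $\del_z F$.} Since $H=g^*g$ with $g$ holomorphic, we have $\del H=H\alpha$, so
\[
\del_zF(z)=\int\varrho(H(z))\rho(\alpha(z))\,\d\nu_\kappa .
\]
To express this through \eqref{eq:ipp}, take $u=b\otimes z^{-m}$ with $m\geq1$, which lies in $\hat D^\omega_\infty\kg_\C$. Then $\alpha(u)=\alpha_{b,m}$, the coefficient from \eqref{eq:coeffs}. Applying \eqref{eq:ipp} to $\varrho(H(z))\rho(b)$, multiplying by $z^{m-1}$ and summing over $b$ and $m$, the right-hand side becomes $-\kappa\int\varrho(H(z))\rho(\alpha(z))\d\nu_\kappa$.

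On the left-hand side, $\cJ_{b,-m}$ is a derivation, so it acts on $\varrho(H)=\varrho(g^*)\varrho(g)$ through both factors. The action on $g$ is given by Lemma \ref{lem:var1}, and produces $\check h\,\varrho(H)\rho(\alpha)$. The action on $g^*$ is given by Lemma \ref{lem:var2}, and produces $\frac{-\Omega_\rho}{z-\bar z^{-1}}\varrho(H)$.

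The ordering needs care here. Lemma \ref{lem:var2} places the $g^*$-variation on the left of $\varrho(H)$, since $\cJ(g^*)g(z)=\cJ(g^*)g^{*-1}H$. After taking expectations, $G$-invariance lets us conjugate freely and take traces, so I will apply $\mathrm{Tr}_V$ to the whole identity to avoid ordering issues. Collecting terms gives
\[
\check h\,\del_z f+\frac{-\Omega_\rho}{z-\bar z^{-1}}f=-\kappa\,\del_z f ,
\qquad\text{that is}\qquad
(\kappa+\check h)\,\del_z f=\frac{\Omega_\rho}{z-\bar z^{-1}}f=\frac{-\Omega_\rho\bar z}{1-|z|^2}f .
\]

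\emph{Solving the ODE.} Since $f$ is real and radial, the equation integrates to $f=(1-|z|^2)^{\Omega_\rho/(\kappa+\check h)}$, using $f(0)=1$. The resulting sign is consistent with the normalisation $(1-e^{-2\epsilon})^{-\Omega_\rho/(\kappa+\check h)}$ in \eqref{eq:def-reg-measure}.

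\emph{Main obstacle.} The delicate step is justifying the use of \eqref{eq:ipp}. One must check that $\varrho(H(z))$ lies in the domain of $\cJ_u$ with enough integrability, and that the sum over $m$ can be interchanged with integration. The sum converges since $|z|<1$, but the moments of $\varrho(H(z))$ need control; I would first attempt this by truncating at finite $m$ and taking limits, invoking integrability estimates from \cite{Bav26}. I would also double-check the left/right ordering of operators, which is resolved by taking traces as described above.
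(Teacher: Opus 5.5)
Your proposal is correct and follows essentially the paper's proof: Schur's lemma reduces the statement to the scalar trace $\cZ_{\kappa,\rho}$. Then \eqref{eq:ipp} with $u=b\otimes z^{-m}$, weighted by $z^{m-1}\rho(b)$, together with the Leibniz rule and Lemmas \ref{lem:var1} and \ref{lem:var2} for the $g$- and $g^*$-variations, gives $(\kappa+\check h)\del_z f=\frac{\Omega_\rho}{z-\bar z^{-1}}f$, which integrates using $f(0)=1$.

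The differences are minor. The paper obtains radiality from this equation and its conjugate rather than from an assumed rotation invariance of $\nu_\kappa$; as you effectively use, this follows anyway because $f$ is real. The paper also justifies exchanging $\del_z$ with the expectation by a Fubini argument on difference quotients rather than by truncation in $m$.
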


\begin{proof}
By construction, the law of $H(z)$ is invariant under conjugation by elements of $G$, hence the matrix $\int_{\cM_\D}\varrho(H(z))\d\nu_\kappa(H)$ commutes with $\varrho(G)$. It follows from Schur's lemma and the irreducibility of $\varrho$ that this matrix is proportional to $\mathrm{Id}_V$. Since $\varrho(H(0))=\mathrm{Id}_V$ almost surely, taking the trace identifies the constant with $\frac{\cZ_{\kappa,\rho}(z)}{\cZ_{\kappa,\rho}(0)}$. We will compute $\cZ_{\kappa,\rho}$ by showing that it solves a differential equation.

    Assume for a moment that $\cZ_{\kappa,\rho}$ is differentiable and that we can exchange the differential and the expectation, so that $\del_z\cZ_{\kappa,\rho}(z)=\langle\del_z\tr(H(z))\rangle_{\kappa,\rho}$ (we will justify this below). We write the expansion $H^{-1}\del H(z)=\sum_{b\in\kB,m\geq1}\alpha_{b,m}b\otimes z^{m-1}$ and use the integration by parts formula \eqref{eq:ipp} to get
    \begin{align*}
    \kappa\del_z\cZ_{\kappa,\rho}(z)
    &=\kappa\int_{\cM_\D}\mathrm{Tr}_V(\varrho(H(z))\circ\rho(\alpha(z)))\d\nu_\kappa(H)\\
&=\kappa\sum_{b\in\kB,m\geq1}z^{m-1}\int_{\cM_\D}\alpha_{b,m}\mathrm{Tr}_V(\varrho(H(z))\circ\rho(b))\d\nu_\kappa(H)\\
    &=-\sum_{b\in\kB,m\geq1}z^{m-1}\int_{\cM_\D}\cJ_{b,-m}\left(\mathrm{Tr}_V(\varrho(H(z))\circ\rho(b))\right)\d\nu_\kappa(H).
    \end{align*}
Recall that $H=g^*g$, so by the Leibniz rule the last line produces one contribution from $\cJ_{b,-m}(g(z))$ and one contribution from $\cJ_{b,-m}(g^*(z))$, which are exactly the quantities computed in Lemma \ref{lem:var1} and Lemma \ref{lem:var2} respectively (and also justifies the convergence of the above series). Hence,
\begin{align*}
    \kappa\del_z\cZ_{\kappa,\rho}(z)=-\check h\int_{\cM_\D}\mathrm{Tr}_V(\varrho(H(z))\circ\rho(\alpha(z)))\d\nu_\kappa(H)+\frac{\Omega_\rho}{z-\bar z^{-1}}\int_{\cM_\D}\mathrm{Tr}_V(\varrho(H(z)))\d\nu_\kappa(H).
\end{align*}   
    Note that the first term on the right-hand side equals $-\check h\del_z\cZ_{\kappa,\rho}(z)$, while the second one is $\frac{\Omega_\rho}{z-\bar z^{-1}}\cZ_{\kappa,\rho}(z)$. Rearranging gives
    \begin{equation}\label{eq:diff-one-point}
    \del_z\cZ_{\kappa,\rho}(z)=\frac{\Omega_\rho}{\kappa+\check h}\frac{\bar z}{|z|^2-1}\cZ_{\kappa,\rho}(z).
    \end{equation}
    
    By symmetry, we get the conjugate formula for $\del_{\bar z}\cZ_{\kappa,\rho}$. We now justify the differentiability of $\cZ_{\kappa,\rho}$. Let $\varepsilon\in\R$ small and write using Fubini $\frac{1}{\varepsilon}(\cZ_{\kappa,\rho}(z+\varepsilon)-\cZ_{\kappa,\rho}(z))=\frac{1}{\varepsilon}\int_0^\varepsilon\langle\del_t\tr(H(z+t))\rangle_{\kappa,\rho}\d t$. From the previous paragraph, the integrand in the right-hand side is just a multiple of $\cZ_{\kappa,\rho}(z+t)$. Hence the right-hand side is differentiable at $\varepsilon=0$, i.e. $\cZ_{\kappa,\rho}$ is differentiable in the real direction at $z$. The same argument shows that $\cZ_{\kappa,\rho}$ is also differentiable in the imaginary direction, and the expression for the differential is indeed the one given by~\eqref{eq:diff-one-point}.
    
    It follows from \eqref{eq:diff-one-point} and the conjugate formula that $(z\del_z-\bar z\del_{\bar z})\cZ_{\kappa,\rho}=0$, which is nothing but the rotational invariance of $\cZ_{\kappa,\rho}$. As for the radial part, we get in polar coordinates $z=re^{i\theta}$
    \[r\del_r\cZ_{\kappa,\rho}(re^{i\theta})=(z\del_z+\bar z\del_{\bar z})\cZ_{\kappa,\rho}(z)=\frac{\Omega_\rho}{\kappa+\check h}\frac{2r^2}{r^2-1}\cZ_{\kappa,\rho}(re^{i\theta}),\]
    which can be rewritten as
    \[\del_r\log\cZ_{\kappa,\rho}(r)=\frac{\Omega_\rho}{\kappa+\check h}\frac{2r}{r^2-1}=\frac{\Omega_\rho}{\kappa+\check h}\del_r\log(1-r^2).\]
    Integrating this equation gives $\cZ_{\kappa,\rho}(z)=(1-|z|^2)^\frac{\Omega_\rho}{\kappa+\check h}\cZ_{\kappa,\rho}(0)$ as required.
\end{proof}

\begin{proposition}\label{prop:two-point}
    For all $z_1,z_2\in\D$ distinct,
    \begin{equation}\label{eq:bound}
    \left|1-z_1\bar z_2\right|^\frac{2\lambda_\rho^-}{\kappa+\check h}\leq\cG_{\kappa,\rho}(z_1,z_2)\leq\left|1-z_1\bar z_2\right|^\frac{2\lambda_\rho^+}{\kappa+\check h},
    \end{equation}
    where $\lambda_\rho^\pm$ are defined in \eqref{eq:lambda_rho}. Moreover, if $\frac{2\lambda_\rho^+}{\kappa+\check h}>-1$, the family $(\cG_{\kappa,\rho}(e^{-\epsilon}\,\cdot,e^{-\delta}\,\cdot)|_{\S^1\times\S^1})_{\epsilon,\delta>0}$ converges almost everywhere and in $L^1(\S^1\times\S^1)$ as $\epsilon,\delta\to0$.
\end{proposition}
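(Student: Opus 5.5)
The plan is to mimic the proof of Proposition \ref{prop:one-point}: derive a first order differential system for the two-point function using the integration by parts formula \eqref{eq:ipp}, now keeping track of the operator structure on $V\otimes V$. Introduce the $\mathrm{End}(V\otimes V)$-valued function
\[\Phi(z_1,z_2):=\int_{\cM_\D}\varrho(H(z_1))\otimes\varrho(H(z_2))\,\d\nu_\kappa(H),\]
so that $\langle\tr(H(z_1))\tr(H(z_2))\rangle_{\kappa,\rho}=\mathrm{Tr}_{V\otimes V}\Phi$. By $G$-invariance, $\Phi$ commutes with the diagonal action $\varrho\otimes\varrho(G)$, hence preserves each isotypic component of $V\otimes V$. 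Moreover, $\Phi(z_1,z_2)$ is positive definite, being an average of tensor products of positive definite operators.

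The first step is to compute $\kappa\del_{z_1}\Phi$. We write $\varrho(H(z_1))\circ\rho(\alpha(z_1))$ through the coefficients $\alpha_{b,m}$ and apply \eqref{eq:ipp}; the operator $\cJ_{b,-m}$ then hits both $H(z_1)$ and $H(z_2)$ by the Leibniz rule. At $z_1$, Lemmas \ref{lem:var1} and \ref{lem:var2} give the terms $-\check h\del_{z_1}\Phi$ and $\frac{\Omega_\rho}{z_1-\bar z_1^{-1}}\Phi$. At $z_2$, we redo the residue computations \eqref{eq:residue-formula} and \eqref{eq:var-g-star} with evaluation point $z_2$ and weight $z_1^{m-1}$. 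The $g(z_2)$-variation gives a kernel $\frac{1}{(\zeta-z_1)(\zeta-z_2)}$, whose residue at $\zeta=z_1$ should cancel against the $z_2$-contribution (no pole at $z_2$ since the bracket term vanishes there). The $g^*(z_2)$-variation gives $\frac{1}{(\zeta-z_1)(\zeta-\bar z_2^{-1})}$, with residue $\frac{1}{z_1-\bar z_2^{-1}}$, multiplied by the operator $\sum_b\rho(b)\otimes\rho(b)$ in the appropriate slot. This yields a one-dimensional KZ-type equation
\[(\kappa+\check h)\del_{z_1}\Phi=\frac{-\Omega_\rho}{z_1-\bar z_1^{-1}}\Phi-\frac{1}{z_1-\bar z_2^{-1}}\Phi\,\Omega_{12},\qquad \Omega_{12}:=\sum_{b\in\kB}\rho(b)\otimes\rho(b),\]
up to the exact sign and ordering conventions. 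There are analogous equations in $\bar z_1$, $z_2$, and $\bar z_2$. Differentiability is justified exactly as in the one-point case.

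Since $\Omega_{12}$ commutes with $\varrho\otimes\varrho(G)$, and hence with $\Phi$ on each isotypic block where it is scalar $\lambda=\frac12\Omega_{\rho'}-\Omega_\rho$, the system decouples. On the block $V_{\rho'}$, integrating as in Proposition \ref{prop:one-point} (with $\Phi(z_1,0)$ known from the one-point function, since $H(0)=1$) gives
\[\Phi|_{V_{\rho'}}=(1-|z_1|^2)^{\frac{\Omega_\rho}{\kappa+\check h}}(1-|z_2|^2)^{\frac{\Omega_\rho}{\kappa+\check h}}|1-z_1\bar z_2|^{\frac{2\lambda}{\kappa+\check h}}P_{\rho'},\]
possibly with a positive initial matrix rather than the projector $P_{\rho'}$. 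Taking the trace gives $\cG_{\kappa,\rho}$ as a convex combination of the functions $|1-z_1\bar z_2|^{2\lambda/(\kappa+\check h)}$, with weights $\dim$ of the blocks normalised by $\dim V^2$; these weights are positive by positivity of $\Phi$. Since $|1-z_1\bar z_2|<2$ may exceed $1$, the bound \eqref{eq:bound} does not follow directly from monotonicity in $\lambda$. I expect this to be the main obstacle: the ordering of exponents must be checked carefully, or the bound restated as comparable up to constants, or the sign of $\kappa+\check h<0$ exploited when $|1-z_1\bar z_2|\le1$.

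For convergence, the explicit formula gives pointwise convergence off the diagonal to $\sum_{\rho'}c_{\rho'}|1-e^{i(\theta_1-\theta_2)}|^{2\lambda_{\rho'}/(\kappa+\check h)}$. Domination follows from $|1-z_1\bar z_2|\ge\frac12|e^{i\theta_1}-e^{i\theta_2}|$ for $z_j=r_je^{i\theta_j}$. This majorant is integrable on $\S^1\times\S^1$ exactly when every exponent exceeds $-1$. The worst exponent is $\frac{2\lambda_\rho^+}{\kappa+\check h}$, which gives the stated condition, and dominated convergence then yields $L^1$ convergence.
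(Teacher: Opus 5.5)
\textbf{Gap: the matrix equation for $\Phi$ does not close.} You try to close a matrix-valued KZ-type system for $\Phi$ and solve it block by block, whereas the paper only works with the scalar $\cG_{\kappa,\rho}$. The claimed cancellation in the $g(z_2)$-variation is wrong. The integrand $\bigl(g(z_2)b-g(\zeta)bg^{-1}(\zeta)g(z_2)\bigr)/\bigl((\zeta-z_1)(\zeta-z_2)\bigr)$ has a removable singularity at $\zeta=z_2$ and a genuine simple pole at $\zeta=z_1$. Nothing cancels that residue, and one gets $\sum_{m}z_1^{m-1}\cJ_{b,-m}(g(z_2))=\frac{g(z_2)bg^{-1}(z_2)-g(z_1)bg^{-1}(z_1)}{z_2-z_1}g(z_2)$, as in the paper. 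Insert this into $\sum_b\varrho(H(z_1))\rho(b)\otimes\varrho(g^*(z_2))(\cdot)$ and use the $\Ad$-invariance of $\sum_b b\otimes b$. Up to the sign from \eqref{eq:ipp}, the term is
\[\frac{1}{z_2-z_1}\Bigl(\bigl(\varrho(H(z_1))\otimes\varrho(H(z_2))\bigr)\Omega_{12}-\bigl(\varrho(g^*(z_1))\otimes\varrho(g^*(z_2))\bigr)\Omega_{12}\bigl(\varrho(g(z_1))\otimes\varrho(g(z_2))\bigr)\Bigr).\]
Likewise, the residue of the $g^*(z_2)$-variation sits at $\zeta=z_1$ and carries $g(z_1)$. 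At the matrix level it therefore also produces the second operator; call its expectation $\Psi$. $\Omega_{12}$ commutes with $\varrho(h)\otimes\varrho(h)$ but not with $\varrho(h_1)\otimes\varrho(h_2)$, so $\Psi$ is not a function of $\Phi$ and your system does not close. A heuristic small-noise expansion in $1/|\kappa|$ suggests that, at leading order, $\Psi-\Phi\,\Omega_{12}$ is a non-zero multiple of $\check h\,\Omega_{12}$. That multiple is traceless, hence invisible after taking traces. So the block-diagonal solution, and with it the exact convex-combination formula for $\cG_{\kappa,\rho}$, is likely false beyond leading order, not merely unproven.

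\textbf{What the paper does instead.} The missing idea is to take traces first. By cyclicity, $\mathrm{Tr}\,\Psi=\int\mathrm{Tr}_{V\otimes V}\bigl((\varrho(gg^*(z_1))\otimes\varrho(gg^*(z_2)))\Omega_{12}\bigr)\d\nu_\kappa$. The identity in law $gg^*\overset{d}{=}H(\bar\cdot)$ turns this into $\mathrm{Tr}(\Phi(\bar z_1,\bar z_2)\Omega_{12})$. The paper then removes the $\frac{1}{z_1-z_2}$ term \eqref{eq:barbis} by conjugation symmetry. Along $z_1=re^{i\theta}$, $z_2=z_1\zeta$ this leaves the scalar equation $(\kappa+\check h)\del_r\log\cG_{\kappa,\rho}=X\,\del_r\log|1-r^2\zeta|^2$, with $X=\mathrm{Tr}(\Phi\Omega_{12})/\mathrm{Tr}\,\Phi$. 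No diagonalisation is needed: positivity of $\Phi$, which you noted, gives $X\in[\lambda_\rho^-,\lambda_\rho^+]$. One then integrates this inequality, which is why the statement is a bound rather than a formula.

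\textbf{Your worry about $|1-z_1\bar z_2|>1$ is justified.} It affects the paper's integration too. For $\Re\zeta\leq0$, the factor $\del_r\log|1-r^2\zeta|^2$ is non-negative along the whole path, so the inequalities come out with $\lambda^\pm_\rho$ exchanged. By \eqref{eq:edo}, the stated upper bound in fact fails at $z_2=-z_1$ for small $|z_1|$: there $X\to\mathrm{Tr}\,\Omega_{12}/(\dim V)^2=0$, while $\lambda^+_\rho>0$. For $\Re\zeta>0$, the factor changes sign at $r^2=\Re\zeta/|\zeta|^2$.

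\textbf{What can be proved, and what suffices.} The equation gives \eqref{eq:bound} up to multiplicative constants. Write $X=\lambda^+_\rho+(X-\lambda^+_\rho)$. The error term is controlled by the positive variation of $s\mapsto\log|1-s^2\zeta|^2$ on $[0,r]$, which is at most $\log 4$. Hence $\cG_{\kappa,\rho}(z_1,z_2)\leq 4^{(\lambda^+_\rho-\lambda^-_\rho)/|\kappa+\check h|}\,|1-z_1\bar z_2|^{2\lambda^+_\rho/(\kappa+\check h)}$. This is enough for the $L^1$ statement with your domination via $|1-z_1\bar z_2|\geq\frac12|e^{i\theta_1}-e^{i\theta_2}|$. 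Pointwise convergence off the diagonal must then come from the boundedness of the coefficients of the equation, not from an explicit formula; the $\frac{1}{z-\bar z^{-1}}$ terms cancel against the normalisation by $\cZ_{\kappa,\rho}$.
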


\begin{proof}
    As in the proof of Proposition \ref{prop:one-point}, we will study the differential of $\cG_{\kappa,\rho}$. We omit the proof of differentiability of $\cG_{\kappa,\rho}$ as it is identical to that of $\cZ_{\kappa,\rho}$. Using the integration by parts formula \eqref{eq:ipp} and the Leibniz rule, we have (writing again $H^{-1}\del H(z)=\sum_{b\in\kB,m\geq1}\alpha_{b,m}b\otimes z^{m-1}$)
    \begin{equation}\label{eq:two-point-start}
    \begin{aligned}
    &\kappa\del_{z_1}\langle \tr(H(z_1))\tr(H(z_2))\rangle_{\kappa,\rho}\\
&\quad=\kappa\int_{\cM_\D}\mathrm{Tr}_V(\varrho(H(z_1))\circ\rho(\alpha(z_1)))\mathrm{Tr}_V(\varrho(H(z_2)))\d\nu_\kappa(H)\\
&\quad=-\sum_{b\in\kB,m\geq1}z_1^{m-1}\int_{\cM_\D}\cJ_{b,-m}\left(\mathrm{Tr}_V(\varrho(H(z_1))\circ\rho(b))\right)\mathrm{Tr}_V(\varrho(H(z_2)))\d\nu_\kappa(H)\\
&\qquad\;-\sum_{b\in\kB,m\geq1}z_1^{m-1}\int_{\cM_\D}\mathrm{Tr}_V(\varrho(H(z_1))\circ\rho(b))\cJ_{b,-m}\left(\mathrm{Tr}_V(\varrho(H(z_2)))\right)\d\nu_\kappa(H).
    \end{aligned}
    \end{equation}
The first sum in the last equality is already known by the previous lemma, so we focus on the second one. By the Leibniz rule, this second term itself can be divided in two contributions: one coming from $\cJ_{b,-m}(g(z_2))$, and the other one from $\cJ_{b,-m}(g^*(z_2))$. 

\textbf{Contribution of $\cJ_{b,-m}(g(z_2))$}. Here we make implicit the dependency on $\varrho$ as in the proof of Lemma \ref{lem:var1}. By a straightforward adaptation of the beginning of the proof of Lemma~\ref{lem:var1} leading to \eqref{eq:residue-formula}, we have for each $b\in\kB$
\[\sum_{m=1}^\infty\cJ_{b,-m}(g(z_2))z_1^{m-1}=\frac{g(z_2)bg^{-1}(z_2)-g(z_1)bg^{-1}(z_1)}{z_2-z_1}g(z_2).\]
Summing over $b\in\kB$, using the cyclicity of the trace and the fact that $\Ad_{g(z_1)}(\kB)$ is an orthonormal basis of $\kB$, we then get
\begin{equation}\label{eq:bar}
\begin{aligned}
&\sum_{b\in\kB,m\geq1}\tr(H(z_1)b)\tr(g^*(z_2)\cJ_{b,-m}(g(z_2)))z_1^{m-1}\\
&\quad=\frac{-1}{z_1-z_2}\sum_{b\in\kB}\tr(H(z_1)b)\tr(H(z_2)(b-g^{-1}(z_2)g(z_1)bg^{-1}(z_1)g(z_2)))\\
&\quad=\frac{-1}{z_1-z_2}\sum_{b\in\kB}\tr(H(z_1)b)\tr(H(z_2)b)-\tr(H(z_1)g^{-1}(z_1)bg(z_1))\tr(H(z_2)g^{-1}(z_2)bg(z_2))\\
&\quad=\frac{-1}{z_1-z_2}\sum_{b\in\kB}\tr(H(z_1)b)\tr(H(z_2)b)-\tr(g(z_1)g^*(z_1)b)\tr(g(z_2)g^*(z_2)b).
\end{aligned}
\end{equation}
Recall that $\tr(H(z_1)b)$ means $\mathrm{Tr}_V(\varrho(H(z_1)\circ\rho(b))$ and likewise for the other terms. We will now give an expression for the integral of \eqref{eq:bar} over $\nu_\kappa$. First, recall that the measure $\nu_\kappa$ is initially defined on $\cA$, and this measure is invariant under complex conjugation of the Taylor coefficients introduced in \eqref{eq:coeffs} (this is immediate from the fact that the representations $(\bJ_u)_{u\in L^\omega\kg_\C}$ and $(\bar\bJ_u)_{u\in L^\omega\kg_\C}$ from \cite[Section 3]{Bav26} play interchangeable roles). Hence, the law of the holomorphic function $g:\D\to G_\C$ is equal to the law of $z\mapsto g^*(\bar z)$, which implies that the law of $gg^*$ equals the law of $z\mapsto H(\bar z)$. Integrating \eqref{eq:bar} over $\nu_\kappa$, we get 
\begin{equation}\label{eq:barbis}
\begin{aligned}
    &\int_{\cM_\D}\sum_{b\in\kB,m\geq1}z_1^{m-1}\tr(H(z_1)b)\tr(g^*(z_2)\cJ_{b,-m}(g(z_2)))\d\nu_\kappa(H)\\
    &=\frac{-1}{z_1-z_2}\int_{\cM_\D}\mathrm{Tr}_{V\otimes V}\left(\left(\varrho(H(z_1))\otimes\varrho(H(z_2))-\varrho(H(\bar z_1))\otimes\varrho(H(\bar z_2))\right)\circ\sum_{b\in\kB}\rho(b)\otimes\rho(b)\right),
\end{aligned}
\end{equation}
where we used the elementary equality $\mathrm{Tr}_{V\otimes V}(A\otimes B)=\mathrm{Tr}_V(A)\mathrm{Tr}_V(B)$ for $A,B\in\mathrm{End}(V)$. We will see below that the expectation of this last quantity vanishes.

\smallskip

\textbf{Contribution of $\cJ_{b,-m}(g^*(z_2))$}. A straightforward adaptation of Lemma~\ref{lem:var2} gives 
\begin{align*}
&\sum_{b\in\kB,m\geq1}\mathrm{Tr}_V\left(\varrho(H(z_1))\rho(b)\right)\mathrm{Tr}_V\left(\cJ_{b,-m}(\varrho(g^*(z_2)))\varrho(g(z_2))\rho(b)\right)z_1^{m-1}\\
&\quad=\frac{-1}{z_1-\bar z_2^{-1}}\sum_{b\in\kB}\mathrm{Tr}_V\left(\varrho(H(z_1))\circ\rho(b)\right)\mathrm{Tr}_V\left(\varrho(H(z_2))\circ\rho(b)\right)\\
    &\quad=\frac{-1}{z_1-\bar z_2^{-1}}\mathrm{Tr}_{V\otimes V}\left(\left(\varrho(H(z_1))\otimes\varrho(H(z_2))\right)\circ\sum_{b\in\kB}\rho(b)\otimes\rho(b)\right).
\end{align*}

\smallskip

We will now gather all the contributions. To this end, it will be convenient to use the parametrisation $z_1=z=re^{i\theta}$, and $z_2=z\zeta$ with $\zeta\in\bar\D\setminus\{1\}$. We fix $\zeta$ and consider $\cG_{\kappa,\rho}(z,z\zeta)$ as a function of $z$ only. By rotation invariance, we get $(z\del_z-\bar z\del_{\bar z})\cG_{\kappa,\rho}(z,z\zeta)=0$. It remains to compute the radial variation. First, observe that $\cG_{\kappa,\rho}(z_1,z_2)=\cG_{\kappa,\rho}(\bar z_1,\bar z_2)$ by the cyclicity of the trace and the fact that $gg^*$ is equal in law to $z\mapsto H(\bar z)$. Hence, $(z\del_z+\bar z\del_{\bar z})\cG_{\kappa,\rho}(z,z\zeta)$ is invariant under $z\mapsto\bar z$. Now, all the contributions that we have computed above are invariant under this involution, except the one coming from \eqref{eq:barbis} which is antisymmetric and must therefore vanish. Moreover, the fact that we have divided by $\cZ_{\kappa,\rho}(z)\cZ_{\kappa,\rho}(z\zeta)$ in the definition of $\cG_{\kappa,\rho}(z,z\zeta)$ (see \eqref{eq:def-correls}) results in the cancellation of the contributions of the divergent term $\frac{1}{z-\bar z^{-1}}$. Altogether, we get
\begin{equation}\label{eq:edo}
\begin{aligned}
    &(\kappa+\check h)r\del_r\log\cG_{\kappa,\rho}(re^{i\theta},re^{i\theta}\zeta))\\
    &\quad=(\kappa+\check h)(z\del_z+\bar z\del_{\bar z})\left(\log\langle\tr(H(z))\tr(H(z\zeta))\rangle_{\kappa,\rho}-\log\cZ_{\kappa,\rho}(z)-\log\cZ_{\kappa,\rho}(z\zeta)\right)\\
    &\quad=4\Re\left(\frac{\zeta|z|^2}{\zeta|z|^2-1}\right)\frac{\int_{\cM_\D}\mathrm{Tr}_{V\otimes V}\left(\left(\varrho(H(z))\otimes\varrho(H(z\zeta))\right)\circ\sum_{b\in\kB}\rho(b)\otimes\rho(b)\right)\d\nu_\kappa(H)}{\langle\tr(H(z))\tr(H(z\zeta))\rangle_{\kappa,\rho}}\\
\end{aligned}
\end{equation}
The prefactor $4\Re(\frac{\zeta|z|^2}{\zeta|z|^2-1})$ comes from the fact that the operator $z\del_z$ hits both variables of $\cG_{\kappa,\rho}$, producing $\frac{z}{z-\bar z^{-1}\bar\zeta^{-1}}+\frac{z\zeta}{z\zeta-\bar z^{-1}}=2\Re(\frac{|z|^2\zeta}{|z|^2\zeta-1})$. We get the same contribution from $\bar z\del_{\bar z}$. From \eqref{eq:lambda_rho}, the fraction in the last line is bounded by the spectrum of the operator $\sum_{b\in\kB}\rho(b)\otimes\rho(b)$. Observing also that $2\Re(\frac{r\zeta}{r^2\zeta-1})=\del_r\log|1-r^2\zeta|$ and that $\cG_{\kappa,\rho}(z_1,z_2)\to1$ as $z_1,z_2\to0$, the previous equation integrates to the inequality \eqref{eq:bound}, which is the first part of Proposition \ref{prop:two-point}. 

For the second part, the previous bound shows that the family $(\cG_{\kappa,\rho}(e^{-\epsilon}\,\cdot,e^{-\delta}\,\cdot)|_{\S^1\times\S^1})_{\epsilon,\delta>0}$ is dominated by the integrable function $\S^1\times\S^1\ni(z_1,z_2)\mapsto|z_1-z_2|^\frac{2\lambda_\rho^+}{\kappa+\check h}$ (this is where the condition $\frac{2\lambda_\rho^+}{\kappa+\check h}>-1$ is used), and that the right-hand side of \eqref{eq:edo} is uniformly bounded for fixed $\zeta\in\bar\D\setminus\{1\}$. Hence, by dominated convergence, it suffices to prove pointwise convergence away from the diagonal, which is immediate by integrating the differential equation \eqref{eq:edo}.
\end{proof}

For the next result, we introduce the modified correlation function
\[\tilde\cG_{\kappa,\rho}(z_1,z_2):=\frac{\langle\tr(H(z_1)H(z_2))\rangle_{\kappa,\rho}}{\cZ_{\kappa,\rho}(z_1)\cZ_{\kappa,\rho}(z_2)}.\]
\begin{proposition}\label{prop:two-point-tilde}
    For $\kappa<\min\{-2\check h,-\check h-2\lambda_\rho^+\}$, the family $(\tilde\cG_{\kappa,\rho}(e^{-\epsilon}\,\cdot,e^{-\delta}\,\cdot)|_{\S^1\times\S^1})_{\epsilon,\delta>0}$ converges almost everywhere and in $L^1(\S^1\times\S^1)$ as $\epsilon,\delta\to0$.
\end{proposition}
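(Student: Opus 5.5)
We follow the proof of Proposition \ref{prop:two-point}, replacing the scalar two-point function by a matrix-valued one. Set
\[\Phi(z_1,z_2):=\int_{\cM_\D}\varrho(H(z_1))\otimes\varrho(H(z_2))\,\d\nu_\kappa(H)\in\mathrm{End}(V\otimes V),\]
so that $\langle\tr(H(z_1)H(z_2))\rangle_{\kappa,\rho}=\mathrm{Tr}_{V\otimes V}(\Phi(z_1,z_2)\circ P)$, where $P$ is the flip $v\otimes w\mapsto w\otimes v$. This uses $\mathrm{Tr}_{V\otimes V}((A\otimes B)P)=\mathrm{Tr}_V(AB)$.

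\textbf{Step 1: a matrix KZ equation for $\Phi$.} We compute $\kappa\del_{z_1}\Phi$ exactly as in \eqref{eq:two-point-start}, but without taking traces, using \eqref{eq:ipp} and the Leibniz rule. The inputs are as follows.
\begin{itemize}
\item Lemma \ref{lem:var1} gives the $-\check h\del_{z_1}$ term, which shifts $\kappa$ to $\kappa+\check h$.
\item Lemma \ref{lem:var2} gives the self-term $\frac{\Omega_\rho}{z_1-\bar z_1^{-1}}\Phi$.
\item The cross term from $\cJ_{b,-m}(g^*(z_2))$ gives $\frac{-1}{z_1-\bar z_2^{-1}}\,\Phi\circ\Omega_{12}$, with $\Omega_{12}:=\sum_{b\in\kB}\rho(b)\otimes\rho(b)$.
\item The cross term from $\cJ_{b,-m}(g(z_2))$ gives the analogue of \eqref{eq:barbis}.
\end{itemize}
We then pass to $z_1=z$, $z_2=z\zeta$ and apply $z\del_z+\bar z\del_{\bar z}$. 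The $\bar z$-antisymmetric \eqref{eq:barbis}-type term cancels as before, by the symmetry $g\leftrightarrow g^*(\bar\cdot)$. We need to check that this symmetry still holds at the matrix level, which looks plausible after composing with the $G$-invariant trace pairing against $P$.

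By $G$-invariance, $\Phi$ commutes with the diagonal action of $G$, and so do $\Omega_{12}$ and $P$. After dividing by $\cZ_{\kappa,\rho}(z)\cZ_{\kappa,\rho}(z\zeta)$, we therefore expect a linear ODE
\[(\kappa+\check h)\,r\del_r\tilde\Phi=2\Re\!\left(\tfrac{2\zeta r^2}{\zeta r^2-1}\right)\mathcal S(\tilde\Phi\circ\Omega_{12}),\]
where $\tilde\Phi$ is the normalised $\Phi$ and $\mathcal S$ a symmetrisation coming from the conjugate formula.

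\textbf{Step 2: solve via Clebsch--Gordan.} Decompose $V\otimes V=\oplus_{\rho'}m_{\rho,\rho'}V_{\rho'}$. On each isotypic block $\Omega_{12}$ is the scalar $\frac12\Omega_{\rho'}-\Omega_\rho$, so the ODE decouples blockwise; the multiplicity spaces only need a linear ODE with commuting coefficient. We integrate it with the initial condition $\tilde\Phi(0,0)=\mathrm{Id}_{V\otimes V}$, as in Proposition \ref{prop:two-point}, using $\del_r\log|1-r^2\zeta|=2\Re\frac{r\zeta}{r^2\zeta-1}$. This gives
\[\tilde\Phi(z_1,z_2)=\bigoplus_{\rho'}|1-z_1\bar z_2|^{\frac{2(\Omega_{\rho'}/2-\Omega_\rho)}{\kappa+\check h}}\mathrm{Id}_{V_{\rho'}}\]
(or at least an operator-norm bound of this form). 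Then
\[\tilde\cG_{\kappa,\rho}=\mathrm{Tr}_{V\otimes V}(\tilde\Phi P)=\sum_{\rho'}c_{\rho'}\,|1-z_1\bar z_2|^{\frac{2\lambda_{\rho'}}{\kappa+\check h}},\qquad c_{\rho'}=\mathrm{Tr}(P|_{V_{\rho'}\text{-block}}),\]
so $|\tilde\cG_{\kappa,\rho}|\le(\dim V)^2\,|1-z_1\bar z_2|^{\frac{2\lambda_\rho^+}{\kappa+\check h}}$ on the region where $|1-z_1\bar z_2|\le1$.

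\textbf{Step 3: convergence.} The hypothesis $\kappa<-\check h-2\lambda_\rho^+$ gives $\frac{2\lambda_\rho^+}{\kappa+\check h}>-1$, since $\kappa+\check h<0$. Hence the bound $|z_1-z_2|^{\frac{2\lambda_\rho^+}{\kappa+\check h}}$ dominates uniformly in $\epsilon,\delta$ and is integrable on $\S^1\times\S^1$. Off the diagonal the explicit formula converges pointwise, and dominated convergence gives convergence almost everywhere and in $L^1$. If only a bound rather than a formula is available, pointwise convergence follows from the uniform boundedness of the ODE coefficient for fixed $\zeta\neq1$, exactly as in Proposition \ref{prop:two-point}.

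\textbf{Main obstacle.} The hardest step is Step 1: keeping track of the operator ordering (left versus right composition by $\Omega_{12}$ from $g$ versus $g^*$ variations), and checking that the antisymmetric term still vanishes at the matrix level rather than only after the scalar trace. If it does not vanish, we would first try pairing $\Phi$ against $P$ before symmetrising, and use the invariance $H(\cdot)\overset{d}{=}(gg^*)(\bar\cdot)$ together with $P(A\otimes B)P=B\otimes A$.
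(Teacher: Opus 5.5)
There is a genuine gap, at the step you yourself flagged as the main obstacle, and it breaks your route to the $L^1$ statement. Your integrable majorant comes from the explicit solution in Step 2, which presupposes that Step 1 gives a closed linear equation for $\Phi$. It does not. Keep the representation implicit as in the proof of Lemma \ref{lem:var1}, and take no trace (compare the second line of \eqref{eq:bar}). Using the $\Ad_{G_\C}$-invariance of $\sum_{b\in\kB}b\otimes b$, the Leibniz term coming from $\cJ_{b,-m}(g(z_2))$ is
\[\frac{-1}{z_1-z_2}\Big(\big(H(z_1)\otimes H(z_2)\big)\circ\Omega_{12}-\big(g^*(z_1)\otimes g^*(z_2)\big)\circ\Omega_{12}\circ\big(g(z_1)\otimes g(z_2)\big)\Big).\]
Nothing else in $\kappa\del_{z_1}\Phi$ cancels the sandwiched operator, and its expectation is a new unknown that cannot be expressed through $\Phi$.

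The paper removes it only after pairing with $\mathrm{Id}_{V\otimes V}$. There, cyclicity turns it into $\mathrm{Tr}\big((gg^*(z_1)\otimes gg^*(z_2))\circ\Omega_{12}\big)$, and the symmetry $gg^*\overset{d}{=}H(\bar\cdot)$ applies. Pairing with the flip $P$ instead gives $\sum_{b\in\kB}\mathrm{Tr}_V\big(b\,g(z_1)g^*(z_2)\,b\,g(z_2)g^*(z_1)\big)$. This involves the mixed products $g(z_1)g^*(z_2)$, which that symmetry does not touch, so your fallback fails too. Consequently, the blockwise Clebsch--Gordan solution, the formula $\tilde\cG_{\kappa,\rho}=\sum_{\rho'}c_{\rho'}|1-z_1\bar z_2|^{\cdots}$ and the bound by $(\dim V)^2|1-z_1\bar z_2|^{\frac{2\lambda_\rho^+}{\kappa+\check h}}$ are all unsupported. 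A warning sign: traced against $\mathrm{Id}$ instead of $P$, your formula would give $\cG_{\kappa,\rho}$ exactly as a sum of pure powers. The paper's equation \eqref{eq:edo} only yields the two-sided bound \eqref{eq:bound}, because its right-hand side is a ratio controlled merely by the spectrum of $\Omega_{12}$.

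The missing idea is an elementary trace inequality, which makes Steps 1--2 unnecessary. For $A,B\in\cH_V$ one has $0<\mathrm{Tr}_V(AB)=\mathrm{Tr}_V(A^{1/2}BA^{1/2})\le\|B\|\,\mathrm{Tr}_V(A)\le\mathrm{Tr}_V(A)\,\mathrm{Tr}_V(B)$, so $0<\tilde\cG_{\kappa,\rho}\le\cG_{\kappa,\rho}$. Proposition \ref{prop:two-point} then directly supplies the majorant $|z_1-z_2|^{\frac{2\lambda_\rho^+}{\kappa+\check h}}$, which is integrable on $\S^1\times\S^1$ under exactly your hypothesis; this is the paper's argument. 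By dominated convergence, only pointwise convergence off the diagonal remains. That needs no closed system and no explicit solution. It suffices to write the (non-closed) differential identity for $\tilde\cG_{\kappa,\rho}$ given by \eqref{eq:ipp} and Lemmas \ref{lem:var1}--\ref{lem:var2}, and to check three things:
\begin{enumerate}
\item the divergent self-terms from Lemma \ref{lem:var2} cancel against the normalisation;
\item the remaining coefficients are regular for fixed $\zeta\neq1$;
\item every remaining term is bounded, for instance by Cauchy--Schwarz for the Hilbert--Schmidt inner product, by a constant times $\mathrm{Tr}_V\varrho(H(z_1))\,\mathrm{Tr}_V\varrho(H(z_2))$, whose normalised expectation $\cG_{\kappa,\rho}$ is bounded away from the diagonal.
\end{enumerate}
Your Step 3 fallback is in this spirit, but as written it still rests on the equation from Step 1.
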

\begin{proof}
    By the elementary inequality $\mathrm{Tr}_V(H_1H_2)\leq\mathrm{Tr}_V(H_1)\mathrm{Tr}_V(H_2)$ valid for $H_1,H_2\in\cH_V$, we have $\tilde\cG_{\kappa,\rho}(z_1,z_2)\leq\cG_{\kappa,\rho}(z_1,z_2)$. Hence, by the first part of Proposition \ref{prop:two-point} and the dominated convergence theorem, it suffices to prove convergence almost everywhere. This can be achieved in exactly the same way as in the proof of Proposition \ref{prop:two-point}: we use Lemmas~\ref{lem:var1} and \ref{lem:var2} to write a differential equation for $\tilde\cG_{\kappa,\rho}$ and observe that all the terms are regular except on the diagonal. Hence, we can integrate the equation to get pointwise limits on the circle away from the diagonal. We omit the details of these computations.
\end{proof}

    \subsection{Completing the proof}\label{subsec:proof}
We will now conclude the proof of Theorem \ref{thm:convergence}, which is a routine argument given Proposition \ref{prop:two-point-tilde}. We will follow closely \cite[Section 6]{Berestycki} and we will omit some details. 

 First, we show that for each Borel set $A\subset\S^1$, the family of $\cH_V$-valued random variables $(M^\epsilon_{\kappa,\rho}(A))_{\epsilon>0}$ converges in $L^2(\nu_\kappa)$, with $\mathrm{End}(V)$ being endowed with the Hilbert--Schmidt norm. We have for all $\epsilon,\delta>0$:
\begin{align*}
&\int_{\cM_\D}\mathrm{Tr}_V\left(\left(M^\epsilon_{\kappa,\rho}(A)-M^\delta_{\kappa,\rho}(A)\right)^2\right)\d\nu_\kappa\\
     &=\int_{A\times A}\left(\tilde\cG_{\kappa,\rho}(e^{-\epsilon+i\theta_1},e^{-\epsilon+i\theta_2})-2\tilde\cG_{\kappa,\rho}(e^{-\epsilon+i\theta_1},e^{-\delta+i\theta_2})+\tilde\cG_{\kappa,\rho}(e^{-\delta+i\theta_1},e^{-\delta+i\theta_1})\right)\d\theta_1\d\theta_2\\
    &\underset{\epsilon,\delta\to0}{\to}0,
\end{align*}
with the last line following from Proposition \ref{prop:two-point-tilde}. Hence, the sequence $(M^\epsilon_{\kappa,\rho}(A))_{\epsilon>0}$ is Cauchy in $L^2(\nu_\kappa;\cH_V)$, hence converges to some random variable which we denote $M_{\kappa,\rho}(A)$. In particular, the family of (real positive) random variables $\mathrm{Tr}_V(M^\epsilon_{\kappa,\rho}(\S^1))_{\epsilon>0}$ converges in $\nu_\kappa$-probability, and so do the total variations of the matrix coefficients of $M^\epsilon_{\kappa,\rho}$. 

 Now, let $\mathcal{I}$ be the collection of open arcs in $\S^1$ with rational endpoints ($\mathcal I$ is a countable generator of the Borel $\sigma$-algebra). From the previous paragraph and a diagonal argument, we can find a decreasing sequence $\epsilon_n\to0$ such that $(M^{\epsilon_n}_{\kappa,\rho}(A))_{n\geq0}$ converges to $M_{\kappa,\rho}(A)$ as $n\to\infty$, simultaneously for $\nu_\kappa$-a.e. $H\in\cM_\D$ and all $A\in\mathcal{I}$. Since the total masses are $\nu_\kappa$-a.s. uniformly bounded, the above convergence extends to all Borel sets $A\subset\S^1$.

Finally, as discussed in Section \ref{subsubsec:measures}, the uniform boundedness of the total masses implies that $\nu_\kappa$-a.s. we can extract a further subsequence $\tilde\epsilon_n$ such that each matrix coefficient of $(M^{\tilde\epsilon_n}_{\kappa,\rho})_{n\geq0}$ converges weakly. Following \cite[Section 6]{Berestycki}, one can deduce that every limiting subsequence $\tilde M_{\kappa,\rho}$ must satisfy $\tilde M_{\kappa,\rho}(A)=M_{\kappa,\rho}(A)$ for all Borel sets $A\subset\S^1$, which implies that $\nu_\kappa$-a.s. the sequence $(M^{\epsilon_n}_{\kappa,\rho})_{n\geq0}$ converges weakly to $M_{\kappa,\rho}$. As explained in \cite[Section 6]{Berestycki}, this implies the weak convergence in probability of $(M^\epsilon_{\kappa,\rho})_{\epsilon>0}$ to $M_{\kappa,\rho}$.

\bibliographystyle{alpha}
\bibliography{yang-mills}

\end{document}